\newtheorem{theorem}{Theorem}
\newtheorem{corol}[theorem]{Corollary}
\newtheorem{rem}[theorem]{Remark}
{\theorembodyfont{\rm} \newtheorem{defin}[theorem]{Definition}}
\newtheorem{lemma}[theorem]{Lemma}
\newtheorem{prop}[theorem]{Proposition}
\newcommand{\D}{\displaystyle}
\newcommand{\ipl}{\langle}
\newcommand{\ipr}{\rangle}
\newcommand{\eop}{\mbox{} \hfill $\blacksquare$}
\def\N{\mathcal{N}}
\def\R{\mathcal{R}}
\newcommand{\cat}[3]{#1 \stackrel{#2}{\mathbf{\sqcap}}#3}
\newcommand{\eps}{\varepsilon}
\newcommand{\lbd}{\lambda}
\begin{document}

\title{On the value function for nonautonomous optimal control problems with
infinite horizon}
\author{J.\,Baumeister \\
\normalsize Department of Mathematics, Goethe University, \\
\normalsize Robert-Mayer Str. 6-10, D-60054 Frankfurt a.M., Germany \\[1ex]
A.\,Leit\~ao\footnote{Partially supported by CNPq, grants
305823/2003-5 and 478099/2004-5} \\
\normalsize Department of Mathematics, \\
\normalsize Federal University of St. Catarina,
88040-900 Florianopolis, Brazil \\[1ex]
G.N.\,Silva\footnote{Partially supported by CNPq and FAPESP.} \\
\normalsize Department of Computer Sciences and Statistics, \\
\normalsize Universidade Estadual Paulista, 15054-000 S.J. Rio Preto, Brazil}
\date{}
\maketitle

\begin{abstract}
In this paper we consider nonautonomous optimal control problems
of infinite horizon type, whose control actions are given by
$L^1$-functions. We verify that the value function is locally
Lipschitz. The equivalence between dynamic programming
inequalities and Hamilton-Jacobi-Bellman (HJB) inequalities for
proximal sub (super) gradients is proven.
Using this result we show that the value function is a Dini
solution of the HJB equation. We obtain a verification result
for the class of Dini sub-solutions of the HJB equation and
also prove a minimax property of the value function with
respect to the sets of Dini semi-solutions of the HJB equation.
We introduce the concept of viscosity solutions of the
HJB equation in infinite horizon and prove the equivalence between
this and the concept of Dini solutions. In the appendix we provide
an existence theorem.
\end{abstract}
\medskip

\noindent {\scriptsize {\bf Keywords:} dynamic programming,
infinite horizon, viscosity solutions, Dini solutions, existence} \\
\noindent{\scriptsize {\bf AMS Subject Classification:} 49L20, 49L25, 49J15}
\setcounter{footnote}{1}
%
%
%
\section{Introduction} \label{sec:intro}

\subsubsection*{Formulation of the optimal control problem}

Given the initial values $(\tau,z) \in [0,\infty) \times \R^n$,
let us consider the following optimal control problem:
$$  \left\{ \begin{array}{l}
   \mbox{Minimize }\ J(u; \tau,z) := \D\int_{\tau}^{\infty} e^{-\delta t}
                     l(t,x(t),u(t)) dt \\
   \mbox{subject to } \\[1ex]
   u \in U_{ad}[\tau,\infty):=\big\{ w \in L^1_{loc}([\tau,\infty); \R^m)\
   |\ w(t) \in \Omega \mbox{ a.e. in } [\tau,\infty) \big\} \\
   x(t) = z + \D\int_{\tau}^t f(s,x(s),u(s)) ds ,\ t \in [\tau,\infty) .
   \end{array} \right. $$
The problem above is called $P_{\infty}(\tau,z)$. Here $l: [0,\infty)
\times \R^n \times \R^m \to \R$, $f: [0,\infty) \times \R^n \times
\R^m \to \R^n$ are given functions; $\Omega \subset \R^m$ is the set
of admissible control actions; $\delta > 0$ denotes the discount
rate. An admissible control strategy $u$ is a $L^1_{loc}$-function
satisfying the control constraint $u(t) \in \Omega$ a.e. in
$[\tau,\infty)$. By $L^1_{loc}$ we mean the space of locally
integrable functions $w:[0,\infty) \to \R^n$. We call state trajectory
an absolutely continuous functions $x:[0,\infty)\rightarrow \R^n$ that
is a solution of the integral equation in $P_{\infty}(\tau,z)$.

A pair of functions $(x(\cdot),u(\cdot))$, consisting of an admissible
control $u$ and the corresponding state trajectory $x$, is called an
admissible process. We are specially interested in the set of admissible
processes with finite costs:
\begin{multline*}
{\cal D}_{{\tau},z;\infty} := \big\{ (x,u)\ |\ u \in
U_{ad}[\tau,\infty); ~~ x(t) = z + \int_{\tau}^t f(r,x(r),u(r)) dr; \\
J(u;\tau,z) < \infty \big\} .
\end{multline*}

We stress the fact that the cost functional in problem
$P_{\infty}(\tau,z)$ is of infinite horizon type and that both the
cost functional and the dynamic are allowed to be time dependent.
This sort of control problem is particularly interesting for
applications related to biological and economic sciences, since no
reasonable bound can be placed on the time horizon. Several
applications of this nature can be found in
\cite{CaHa,Cl,SS,ST,Fl,BaLe}, among others.

\subsubsection*{Discussion of the main contributions}

We are mainly concerned with the investigation of the value function for
the family of control problems $P_{\infty}(\tau,z)$.

We managed to extend to nonautonomous optimal control problems of
infinite horizon type some classical results of the dynamic
programming theory, like dynamic programming inequalities,
verification results, characterization of the
value function as both a Dini and viscosity solution of the HJB equation.%
\footnote{For an account of different types of solutions of the
HJB equation, see \cite{BaCa,CLSW,CrLi}.}

The difficulty in the formulation and analysis of the HJB equation for
problem $P_{\infty}(\tau,z)$ arises from the choice of the boundary
condition at infinity for the partial differential equation. It turns out
that this boundary condition can be formulated as uniform convergence
to zero, as $t \to \infty$, in each compact set of $\R^n$. Under the
assumption of existence of invariant sets for the trajectories, this
decay condition is exactly what one needs to prove the verification
result and the minimax property of the value function among Dini
semisolutions of the HJB equation.

Another feature of the present approach is the choice of controllers
as $L^1$-functions rather than $L^\infty$-functions, which are more common
in the existing literature (see, e.g., \cite{BaCa,dLi}).

The main tool used in this text to analyze the HJB equation in
infinite horizon is the concept of Dini solutions. The analysis of
Dini solutions of the HJB equation for finite horizon is
considered in \cite{CLSW,RaVi,VW}, among others. In finite
horizon, the analysis of the HJB equation for nonautonomuos
control problems is developed in \cite{RaVi,FPR}.

Dynamic programming for autonomous infinite horizon control
problems is considered by many authors \cite{Ha,BaCa,dLi}.
\cite{Ha} considers some economic applications of infinite horizon
which are modeled by control problems with continuous controllers.
 \cite{BaCa} characterizes the value
function as a viscosity solution of the HJB equation. They also
provide verification results and minimax properties. \cite{dLi}
obtains some regularity results for the value function and also
characterizes it as a stable viscosity solution of the HJB
equation.

        However, dynamic programming for nonautonomuos control problems of
infinite horizon has not been exploited in the literature. This
paper makes a first effort trying to close up this gap by setting up a
framework in which the dynamic programming approach can still be carried
out.

        As part of our developments we address the question of
existence of optimal processes for $P_{\infty}(\tau,z)$. An
existence theorem for problems of infinite horizon type is given
by \cite{Ba}. However, this result is not applicable in our
situation.

\subsubsection*{Outline of the paper}

        The paper is organized as follows. In Section~\ref{sec:lipschitz},
we verify some properties of the value function, including (local)
Lipschitz continuity. In Section~\ref{sec:dini-hjb} we introduce
several concepts of generalized derivatives and gradients. The
main purpose of this section is the definition the Dini solutions
of the HJB equation.
In Section~\ref{sec:monotonicity} we analyze some monotonicity properties of
functions related to the HJB equation.
In Section~\ref{sec:hjb-inequal} we prove the equivalence between certain
HJB inequalities for Dini-gradients and proximal-gradients. In the
sequel we use this result to prove that the value function is a Dini
solution of the HJB equation.
In section~\ref{sec:verif-mm} we derive a verification result for the
Dini sub-solutions of the HJB equation and prove a minimax property for
the value function (the uniqueness of Dini solutions follows from this
property).
In Section~\ref{sec:visc-sol} we verify the equivalence between the
concepts of Dini and viscosity solutions of the HJB equation. This
characterizes the value function as a viscosity solution of the
HJB equation.
The Appendix is devoted to discussing the issue of existence of optimal
processes for $P_{\infty}(\tau,z)$.
%
%
%
\section{Lipschitz continuity of the value function} \label{sec:lipschitz}
\setcounter{equation}{0}

Let us consider again the family of optimal control problems
$P_\infty(\tau,z)$, for initial conditions $(\tau,z) \in \R \times
\R^n$. In the sequel we shall consider problem
$P_{\infty}(\tau,z)$ under the following assumptions:
\begin{itemize}
\item[\it A1)] There exists $K_1 > 0$, such that for every fixed $t
\in [0, \infty)$, $u \in \Omega$
$$ |l(t,x,u) - l(t,z,u)| + |f(t,x,u) - f(t,z,u)| \le K_1 |x - z| , $$
holds for all $x, z \in \R^n$;
\item[\it A2)] There exists a positive scalar $K_2$ such that $f$ and $l$
satisfy the linear growth condition
$$ |l(t,x,u)| + |f(t,x,u)| \le K_2 (1 + |x|) , $$
for all $t \in [0,\infty)$, $x \in \R^n$, $u \in \Omega$;
\item[\it A3)] The functions $l(\cdot, \cdot, \cdot)$,
$f(\cdot, \cdot, \cdot)$ are continuous;
\item[\it A4)]  The set of points $\bar{f} (t,x,\Omega) :=
\{ (e^{-\delta t} l(t,x,v), f(t,x,v))^T\ |\ v \in \Omega \}$ is a
convex set in $\R^{n+1}$ for all $t$, $x$ (here $T$ means transposition).
\item[\it A5)] $\Omega$ is compact;
\item[\it A6)] Given $(\tau,z) \in [0, \infty) \times \R^n$, there
exists a bounded (invariant) set ${\cal S}_{\tau,z} \subset \R^n$ such
that every admissible process $(x(\cdot),u(\cdot)) \in {\cal D}_{\tau,z;
\infty}$ satisfies $x(t) \in {\cal S}_{\tau,z}$, for $t \ge \tau$.
\end{itemize}

\noindent
The function $\bar{f}$ in {\it A4)} is called the extended velocity vector
related to the control problem $P_{\infty}(\tau,z)$.

We start by defining a key function in the theory of dynamic programming,
namely, the value function.

\begin{defin} \label{def:vf}
The application
\begin{eqnarray}
V: [0,\infty) \times \R^n & \longrightarrow & \R \nonumber\\
(\tau,z) & \longmapsto &
                 \inf\{J(u;\tau,z)\, |\, (x,u) \in {\cal D}_{\tau,z;\infty} \}
\end{eqnarray}
is called the {\em value function} associated with the family of problems
$P_\infty(\tau,z)$.
\end{defin}

Next we define the Hamilton-function as the application
\begin{eqnarray}
 H:[0,\infty)\times\R^n\times\R^n\times\R^m & \longrightarrow & \R \nonumber\\
   (t,x,\lbd,u) & \longmapsto & \ipl \lbd ,\, f(t,x,u) \ipr \ + \
   e^{-\delta t} l(t,x,u) .
\end{eqnarray}
We also define the (control independent) auxiliary function
\begin{eqnarray} \label{eq:def-hamilt}
 {\cal H} : [0,\infty) \times \R^n \times \R^n & \longrightarrow & \R
 \nonumber\\
 (t,x,\lbd) &  \longmapsto & \inf_{u \in \Omega} \{ H(t,x,\lbd,u) \}
\end{eqnarray}
Instead of {\it A6)}, we shall consider in this section the assumption
\begin{itemize}
\item[\it \~A6)] The Lipschitz constant $K_1$ in {\it A1)} and the discount
rate $\delta$ sa\-tis\-fy $K_1 - \delta < 0$. Furthermore, given
$(\tau,z) \in [0, \infty) \times \R^n$, there exists a neighborhood
${\cal V}_{\tau,z}$ of $(\tau,z)$ and a bounded (invariant) set
${\cal S}_{\tau,z} \subset \R^n$ such that for every initial condition
$(\tilde{\tau},\tilde{z}) \in {\cal V}_{\tau,z}$, the admissible processes
$(x(\cdot),u(\cdot)) \in {\cal D}_{\tilde{\tau},\tilde{z};\infty}$
satisfy $x(\cdot) \in {\cal S}_{\tau,z}$.
\end{itemize}

\begin{rem}
The assumption concerning the relationship between the Lipschitz
constant $K_1$ and the discount rate $\delta$ looks somehow
artificial. Nevertheless, it has been used by several authors,
see, e.g., \cite{BaCa,Wi,Co}.
\end{rem}

Under the  extra assumption above, it is possible to verify some
important properties of the value function $V$, which we summarize
in Lemma~\ref{lem:vf-regular}. First we state an auxiliary lemma.
The proof of this result does not contain new ideas and will be
omitted.

\begin{lemma} \label{lem:xyest}
Suppose the assumptions {\it A1), A2)} are satisfied. Let $(\tau,z)$,
$(\tau,z_1) \in [0,\infty) \times B_R(0)$ and $u \in U_{ad}[\tau,\infty)$.
Let $x$, $x_1$ be the associated states. Then
\begin{equation} \label{eq:dis}
|x(t) - x_1(t)| \le e^{K_1 (t-\tau)} |z - z_1| ,\ t \ge \tau .
\end{equation}
Furthermore, we have
\begin{equation} \label{eq:growth}
|x(t) - z| \le K_2 (1 + R) e^{K_2 (t-\tau)} |t-\tau| ,\ t \ge \tau .
\end{equation}
\end{lemma}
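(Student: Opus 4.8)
The plan is to prove the two estimates by standard Gr\"onwall arguments applied to the integral equation defining the state trajectories.

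\medskip

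\noindent\textbf{Plan for \eqref{eq:dis}.} First I would write, for $t\ge\tau$,
\[
 x(t)-x_1(t) = (z-z_1) + \int_\tau^t \big(f(s,x(s),u(s)) - f(s,x_1(s),u(s))\big)\,ds,
\]
since the two trajectories are driven by the same control $u$. Taking norms and using the Lipschitz bound from {\it A1)} (with the fixed control value $u(s)\in\Omega$) gives
\[
 |x(t)-x_1(t)| \le |z-z_1| + K_1\int_\tau^t |x(s)-x_1(s)|\,ds.
\]
Then Gr\"onwall's inequality yields $|x(t)-x_1(t)| \le e^{K_1(t-\tau)}|z-z_1|$, which is \eqref{eq:dis}. (One should first check the trajectories exist on $[\tau,\infty)$: {\it A2)} gives linear growth of $f$, so local solutions extend globally; this is the part being taken for granted as ``containing no new ideas''.)

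\medskip

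\noindent\textbf{Plan for \eqref{eq:growth}.} Here I would estimate $|x(t)-z|$ directly from
\[
 x(t) - z = \int_\tau^t f(s,x(s),u(s))\,ds,
\]
so $|x(t)-z| \le \int_\tau^t K_2(1+|x(s)|)\,ds$ by {\it A2)}. Writing $|x(s)| \le |x(s)-z| + |z| \le |x(s)-z| + R$ (using $z\in B_R(0)$) gives
\[
 |x(t)-z| \le K_2(1+R)(t-\tau) + K_2\int_\tau^t |x(s)-z|\,ds,
\]
and applying Gr\"onwall (in the form: if $\phi(t)\le a(t) + c\int_\tau^t\phi$ with $a$ nondecreasing, then $\phi(t)\le a(t)e^{c(t-\tau)}$) with $a(t)=K_2(1+R)(t-\tau)$ and $c=K_2$ produces $|x(t)-z| \le K_2(1+R)(t-\tau)e^{K_2(t-\tau)} = K_2(1+R)e^{K_2(t-\tau)}|t-\tau|$, which is exactly \eqref{eq:growth}.

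\medskip

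\noindent\textbf{Main obstacle.} There is essentially no serious obstacle — both inequalities are textbook Gr\"onwall estimates. The only points requiring minor care are: (i) justifying global existence of the trajectories on $[\tau,\infty)$ before the Gr\"onwall argument is legitimate (handled by the linear growth in {\it A2)}), and (ii) using the correct form of Gr\"onwall for \eqref{eq:growth}, where the ``inhomogeneous'' term $a(t)$ is itself time-dependent rather than constant; choosing the nondecreasing majorant $a(t)=K_2(1+R)(t-\tau)$ and noting it can be pulled out of the exponential-weighted estimate gives the stated bound. Accordingly the authors remark that the proof ``does not contain new ideas and will be omitted''.
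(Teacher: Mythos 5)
Your argument is correct: both inequalities follow exactly by the standard Gr\"onwall estimates you describe (the difference of trajectories driven by the same control for \eqref{eq:dis}, and the linear-growth bound with the nondecreasing-majorant form of Gr\"onwall for \eqref{eq:growth}), and this is precisely the routine proof the paper omits as containing ``no new ideas.'' No gaps; your side remarks on global existence via {\it A2)} are the only points of care and you handle them appropriately.
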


In the next lemma we need to combine (admissible) controls defined in
different time intervals. Let $u_1 \in U_{ad}[s_1,\infty]$, $u_2 \in
U_{ad} [s_2,\infty]$, where $0 \le s_1 \le s_2 < \infty$. We define
the {\em concatenation}
$$  \cat{u_1}{s_2}{u_2} \ := \ \left\{
    \begin{array}{r@{\ }l}
      u_1(t) &, t \in [s_1,s_2) \\
      u_2(t) &, t \in [s_2,\infty)
    \end{array} \right. . $$

\begin{lemma} \label{lem:vf-regular}
Let assumptions A1),\dots, A5) and \~A6) hold. Then we have
\begin{itemize}
\item [a) ] The value function is well defined on $[0,\infty)\times\R^n$;
\item [b) ] The value function is localy Lipschitz continuous in its
domain of definition.
\end{itemize}
\end{lemma}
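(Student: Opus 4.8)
The plan is to establish parts (a) and (b) in sequence, using Lemma~\ref{lem:xyest} as the main workhorse together with the hypotheses on the invariant sets. For part (a), I would fix $(\tau,z)$ and argue that $V(\tau,z)$ is both finite and bounded below. Finiteness (i.e.\ $V(\tau,z) < \infty$) requires producing at least one admissible process in ${\cal D}_{\tau,z;\infty}$; the most natural route is to invoke the existence result from the Appendix, or more elementarily to note that under {\it A2), A5)} and the existence of the bounded invariant set ${\cal S}_{\tau,z}$ from \~A6), any admissible control yields a trajectory confined to ${\cal S}_{\tau,z}$, so that $|l(t,x(t),u(t))| \le K_2(1 + \sup_{y \in {\cal S}_{\tau,z}} |y|) =: M$, and hence $J(u;\tau,z) \le M \int_\tau^\infty e^{-\delta t}\,dt = M e^{-\delta\tau}/\delta < \infty$. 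The same bound applied with the sign reversed (i.e.\ $l(t,x(t),u(t)) \ge -M$) gives $J(u;\tau,z) \ge -M e^{-\delta\tau}/\delta$ for every admissible process, so the infimum defining $V$ is bounded below and $V(\tau,z) \in \R$. This establishes (a).

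For part (b), fix $(\tau,z)$ and work inside the neighborhood ${\cal V}_{\tau,z}$ and with the common invariant set ${\cal S}_{\tau,z}$ supplied by \~A6); choose $R$ large enough that ${\cal S}_{\tau,z} \subset B_R(0)$ and that ${\cal V}_{\tau,z} \subset [0,\infty) \times B_R(0)$. I would prove Lipschitz continuity in $z$ and in $\tau$ separately. For the spatial estimate, take $(\tilde\tau,\tilde z_1), (\tilde\tau,\tilde z_2) \in {\cal V}_{\tau,z}$ and a near-optimal control $u$ for $P_\infty(\tilde\tau,\tilde z_1)$; feeding $u$ into $P_\infty(\tilde\tau,\tilde z_2)$ produces a trajectory $x_2$ with, by \eqref{eq:dis}, $|x_1(t) - x_2(t)| \le e^{K_1(t-\tilde\tau)}|\tilde z_1 - \tilde z_2|$. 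Then
\[
J(u;\tilde\tau,\tilde z_2) - J(u;\tilde\tau,\tilde z_1)
= \int_{\tilde\tau}^\infty e^{-\delta t}\bigl( l(t,x_2(t),u(t)) - l(t,x_1(t),u(t)) \bigr)\,dt,
\]
which by {\it A1)} is bounded in absolute value by $K_1 |\tilde z_1 - \tilde z_2| \int_{\tilde\tau}^\infty e^{-\delta t} e^{K_1(t-\tilde\tau)}\,dt = K_1 |\tilde z_1 - \tilde z_2| e^{-\delta\tilde\tau}/(\delta - K_1)$, the integral being finite precisely because $K_1 - \delta < 0$. Taking the infimum over near-optimal $u$ and then swapping the roles of $\tilde z_1$ and $\tilde z_2$ yields $|V(\tilde\tau,\tilde z_1) - V(\tilde\tau,\tilde z_2)| \le L\,|\tilde z_1 - \tilde z_2|$ with $L = K_1/(\delta - K_1)$ (up to the harmless factor $e^{-\delta\tilde\tau} \le 1$).

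For the temporal estimate, take $\tilde\tau_1 < \tilde\tau_2$ with $(\tilde\tau_i, \tilde z) \in {\cal V}_{\tau,z}$. Here I would use the concatenation construction: given a near-optimal $u_2$ for $P_\infty(\tilde\tau_2,\tilde z)$, define $u_1$ on $[\tilde\tau_1,\infty)$ by running an arbitrary admissible control on $[\tilde\tau_1,\tilde\tau_2)$ and then switching to (a time-shifted copy of) $u_2$; compare costs using \eqref{eq:growth} to control how far the state has drifted from $\tilde z$ over the short interval $[\tilde\tau_1,\tilde\tau_2)$, together with {\it A1), A2)} and the invariance from \~A6) to compare the tail integrals, and bound the difference by $C\,|\tilde\tau_1 - \tilde\tau_2|$. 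Combining the two one-variable Lipschitz bounds by the triangle inequality gives local Lipschitz continuity of $V$ on ${\cal V}_{\tau,z}$, and since $(\tau,z)$ was arbitrary, on all of $[0,\infty) \times \R^n$. I expect the temporal estimate to be the main obstacle: one must carefully bound the discrepancy between the trajectory of the concatenated control and that of $u_2$ on the whole half-line $[\tilde\tau_2,\infty)$ (not just on the short gap), which forces one to combine the Gronwall-type bound \eqref{eq:dis}, the linear-growth bound \eqref{eq:growth}, the contraction hypothesis $K_1 < \delta$, and the invariant-set hypothesis \~A6) simultaneously, whereas the spatial estimate is a clean direct computation.
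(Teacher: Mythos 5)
Your argument is essentially the paper's: part (a) rests on the existence result of the Appendix, and part (b) is proved by reusing/concatenating near-optimal controls and estimating the resulting cost differences via Lemma~\ref{lem:xyest}, Gronwall, and the standing condition $K_1-\delta<0$ from {\it \~A6)}. The only real difference is organizational: you separate the spatial and the temporal increments and combine them by the triangle inequality (which requires shrinking to a product neighborhood so the intermediate point is admissible), whereas the paper treats both increments at once through the symmetric estimate (\ref{eq:vf-absch}), built from the concatenation $\cat{u_1}{s_2}{u_2}$ and the restriction ${u_1}_{|[s_2,\infty)}$; your sketched temporal step, once written out, is exactly the computation following (\ref{eq:vf-lip-abs}), so nothing new is needed there.

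Two caveats. First, your ``more elementary'' route to $V(\tau,z)<\infty$ misreads {\it \~A6)}: the invariance stated there applies only to processes that already belong to ${\cal D}_{\tilde\tau,\tilde z;\infty}$, i.e.\ that already have finite cost, so you cannot invoke it to conclude that an \emph{arbitrary} admissible control generates a trajectory confined to ${\cal S}_{\tau,z}$ and hence a finite cost --- that reasoning is circular. Keep the route via Theorem~\ref{satz:exist}, which is what the paper does; by contrast, your use of the invariant set to bound $J$ from below over ${\cal D}_{\tau,z;\infty}$ is sound, since there you only quantify over finite-cost processes. Second, in the temporal estimate do not use a time-shifted copy of $u_2$: the problem is nonautonomous, so shifting the control changes both $f$ and $l$; simply use $u_2$ itself on $[\tilde\tau_2,\infty)$, as in the paper's concatenation. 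Relatedly, when you compare $V$ with the cost of a transplanted or concatenated control you should note (as the paper does in a footnote) that this cost could a priori be infinite; its finiteness follows from the very difference estimate you derive, and only then is the comparison with the infimum defining $V$ legitimate.
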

\begin{proof} Notice that the existence of an optimal process is guaranteed
by Theorem~\ref{satz:exist}. Than, {\it a)} follows.
The assertion {\it b)} is the most interesting one. To prove it, it is
enough to verify that for every $T>0$, $R>0$, there exists a constant
$C$ such that for all $(s_1,z_1)$, $(s_2,z_2) \in [0,T] \times
\overline{B}_R(0)$ the following inequality holds:
\begin{equation} \label{eq:vf-lip}
|V(s_1,z_1) -V(s_2,z_2)| \le C (|s_1-s_2| +|z_1-z_2|) .
\end{equation}
Since $[0,T] \times \overline{B}_R(0)$ is compact, it follows from
{\it \~A6)} that there exists a bounded invariant set ${\cal S}_{T,R}
\subset \R^n$ such that for every initial condition $(\tau,z) \in
[0,T] \times \overline{B}_R(0)$, the admissible processes
$(x(\cdot),u(\cdot)) \in {\cal D}_{\tau,z;\infty}$ satisfy
$x(\cdot) \in {\cal S}_{T,R}$, i.e.,
\begin{equation} \label{eq:traj-estim}
|x(t)| \le M_{T,R} ,\ t \in [\tau,\infty) .
\end{equation}
Let $(s_1,z_1)$, $(s_2,z_2) \in [0,\infty) \times B_R(0)$ be given,
with $s_1 \le s_2$. Given $\eps > 0$, choose $u_1 \in U_{ad}[s_1,\infty)$,
$u_2 \in U_{ad}[s_2,\infty)$ such that
$$  V(s_i,z_i) \; \ge \; J(u_i; s_i,z_i) \, - \, \eps\, ,\ i = 1,2\, . $$
Define $\widetilde{u}_1 := \cat{u_1}{s_2}{u_2}$, $\widetilde{u}_2 :=
{u_1}_{|[s_2,\infty)}$. Let $x_1$, $x_2$, $\widetilde{x}_1$, $\widetilde{x}_2$
be the corresponding states with $x_1(s_1) = \widetilde{x}(s_1) = z_1$,
$x_2(s_2) = \widetilde{x}_2(s_2) = z_2$.%
\footnote{Notice that $J(\widetilde{u}_1; s_1, z_1)$ and/or
$J(\widetilde{u}_2; s_2, z_2)$ may be infinite.}
Then, it follows
\begin{eqnarray*}
V(s_1,z_1) - V(s_2,z_2) & \le & V(s_1,z_1) - J(u_2; s_2,z_2) + \eps
                                \nonumber \\
& \le & J(\widetilde{u}_1; s_1,z_1) - J( u_2; s_2,z_2) + \eps
\end{eqnarray*}
and by symmetry
\begin{eqnarray}
| V(s_1,z_1) - V(s_2,z_2) | & \le & \eps +
   | J(\widetilde{u}_1;s_1,z_1) - J(u_2;s_2,z_2) | \nonumber \\
& & + | J(u_1;s_1,z_1) - J(\widetilde{u}_2;s_2,y_2) | . \label{eq:vf-absch}
\end{eqnarray}
To obtain (\ref{eq:vf-lip}) we have to estimate the terms
$$|J(\widetilde{u}_1; s_1,z_1) - J(u_2; s_2,z_2)|\, ,\ \
  |J(u_1; s_1,z_1) - J(\widetilde{u}_2; s_2,y_2)| . $$
From the definition of $J$ and assumptions {\it A2)}, {\it A3)} it follows that
\begin{eqnarray}
|J(\widetilde{u}_1; s_1,z_1) - J(u_2; s_2,z_2)| & \le&
  \Big| \int_{s_1}^{s_2} e^{-\delta r}
  l(r,\widetilde{x}_1(r),\widetilde{u}_1(r))dr \Big| \nonumber \\
&\hspace{-.2cm} & \hspace{-3.5cm}
  + \, \Big| \int_{s_2}^\infty e^{-\delta r}
  l(r,\widetilde{x}_1(r),\widetilde{u}_1(r)) -
  e^{-\delta t} l(r,x_2(r),u_2(r)) dr \Big| \nonumber \\
&\hspace{-7.2cm} \le & \hspace{-3.5cm}
  K_2 \int\limits_{s_1}^{s_2}\! e^{-\delta r} (1+|x_1(r)|) dr +
  K_1 \int\limits_{s_2}^\infty e^{-\delta r}
  |\widetilde{x}_1(r)-x_2(r)| dr . \label{eq:vf-lip-abs}
\end{eqnarray}
From the Gronwall lemma, the first term on the right hand side of
(\ref{eq:vf-lip-abs}) can be estimated by $\widetilde{C} |s_1 - s_2|$,
with some constant $\widetilde{C}$ depending on $K_2$, $T$ and $R$.
We now estimate the last term. It follows from (\ref{eq:dis}) that
\begin{eqnarray*}
|\widetilde{x}_1(r)-x_2(r)| & \le &
        e^{K_1 (r-s_2)} |\widetilde{x}_1(s_2)-z_2| \\
& \le & e^{K_1 (r-s_2)} \big( |\widetilde{x}_1(s_2)-z_1| + |z_1-z_2| \big),
\end{eqnarray*}
for $r \ge s_2$. Furthermore, it follows from (\ref{eq:growth}) that
$|\widetilde{x}_1(s_2)-z_1| \le C |s_2-s_1|$, with $C = K_2(1+R)e^{K_2T}$.
Hence,
$$  |\widetilde{x}_1(r) - x_2(r)| \le
    C e^{K_1 (r-s_2)} (|s_1 -s_2| + |z_1 -z_2|) ,\ r > s_2 . $$
Substituting in (\ref{eq:vf-lip-abs}) and using the assumption $K_1<\delta$,
we obtain
$$ |J(\widetilde{u}_1; s_1,z_1) - J(u_2; s_2,z_2)| \le
   C\, (|s_1 -s_2| + |z_1 -z_2|) . $$
The term $|J(u_1; s_1,z_1) - J(\widetilde{u}_2; s_2,y_2)|$ can be
estimated analogously and it follows from (\ref{eq:vf-absch}) that
$$ |V(s_1,z_1) - V(s_2,z_2)| \le  \eps + C\, (|s_1-s_2| + |z_1 - z_2|), $$
where the constant $C$ is independent of $z_1$, $z_2$, $s_1$, $s_2$, $\eps$
and grows exponentially with $T$. By taking the limit $\eps \downarrow 0$,
the lemma follows. \eop
\end{proof}
%
%
%
\section{Dini solutions of the HJB equation} \label{sec:dini-hjb}
\setcounter{equation}{0}

In this section we discuss a first concept of solution for the HJB
equation

\begin{equation} \label{eq:hjbg}
\partial_t v(t,x) \, + \, {\cal H}(t,x,\partial_x v(t,x)) \ = \ 0 ,
\ (t,x) \in (0,\infty) \times \R^n ,
\end{equation}
where ${\cal H}$ is the function defined in (\ref{eq:def-hamilt}).

        As it is well known, one should not consider $C^1$-solutions
of (\ref{eq:hjbg}) since such solutions do not exist in general: no
matter how regular the control problem is, the value function might
not be differentiable everywhere. Thus, a concept of nonsmooth
solution for the HJB equation is necessary in this context and will
be introduced in the sequel.

\begin{rem}
The HJB equation (\ref{eq:hjbg}) is of nonstationary type since the
control problem is nonautonomous. If the functions $f$, $l$ do not
depend explicitly on $t$, then, by a simple time transformation,
(\ref{eq:hjbg}) can be transformed into a stationary equation.
\end{rem}

For the rest of this section let $\Phi$ be a
mapping from $[0,\infty) \times \R^n$ to $\R$. This mapping should be
considered as candidate for the value function.

\begin{defin} \label{def:dini-der}
The {\em lower Dini derivative} of a function $\Phi$ at $(t,x)$ in
the direction $(\alpha,\xi) \in \R^{1+n}$, is defined by
$$  D^- \Phi(t,x;\alpha,\xi):= \liminf\limits_{r\downarrow 0; w\to\xi}
\frac{\D\Phi(t + r\alpha, x + rw) - \Phi(t,x)}{\D r} . $$
\end{defin}

        Before proceeding we make a remark on the definition of
lower Dini directional derivatives. Recently, it has been widely
accepted that a lower Dini directional derivative must be defined
through the lower limit with changes in both variables $(t,x)$,
as it is defined above. However, since we are dealing with
Lipschitz functions, the more general definition reduces to a
limit on $r \downarrow 0$ alone, as according to \cite[Lemma~5.1]{VW}.

        One should notice that the change of the $\liminf$ in Definition~%
\ref{def:dini-der} by a $\limsup$ allows the definition of the
\textit{upper Dini derivative} of $\Phi$ (denoted by $D^+ \Phi$).

        Next we define the Dini generalized gradients of a function.

\begin{defin} \label{def:dini-grad}
The {\em Dini sub-gradient} of a function $\Phi$ at $(t,x) \in [0,\infty)
\times \R^n$ is defined by
$$ \partial_D \Phi(t,x) := \{ (\alpha,\xi) \in \R^{1+n} ;\
   D^- \Phi(t,x; u,v) \ge u \alpha + \ipl v , \xi \ipr,\ \forall
   (u,v) \in \R^{1+n} \} . $$
Analogously, the {\em Dini super-gradient} of $\Phi$ at $(t,x) \in
[0,\infty) \times \R^n$ is defined by
$$ \partial^D \Phi(t,x) := \{ (\alpha,\xi) \in \R^{1+n} ;\
   D^+ \Phi(t,x; u,v) \le u \alpha + \ipl v , \xi \ipr,\ \forall
   (u,v) \in \R^{1+n} \} . $$
\end{defin}

It is easy to see that $\partial^D \Phi(t,x) = -\partial_D (-\Phi)(t,x)$.
Now we are ready to introduce the concept of Dini semi-solutions of the
HJB equation, which is our main interest in this paper.

\begin{defin} \label{def:dini-sol}
We say that a function $\Phi$ is a {\em Dini sub-solution} of the
HJB equation if it is continuous and satisfies
\begin{enumerate}
\item[a)] for all $(\tau,z) \in [0,\infty) \times \R^n$,
\begin{equation} \label{eq:dini-deriv-grad-hjb}
\alpha + {\cal H}(\tau,z,\xi) \ge 0 ,\
\forall (\alpha,\xi) \in \partial^D \Phi(\tau,z) ,
\end{equation}
\item[b)] $\Phi(\tau,z) \to 0,$ as $\tau\to\infty$, uniformly on each
compact subset $K \subset \R^n$.
\end{enumerate}
If $\Phi$ is continuous, satisfies {\it b)}, and further
\begin{enumerate}
\item[c)] for all $(\tau,z) \in [0,\infty) \times \R^n$,
\begin{equation} \label{eq:dini+deriv-grad-hjb}
\alpha + {\cal H}(\tau,z,\xi) \le 0 ,\
\forall (\alpha,\xi) \in \partial_D \Phi(\tau,z) ,
\end{equation}
\end{enumerate}
then it is called {\em Dini super-solution} of the HJB equation. If
$\Phi$ satisfies {\it a)}, {\it b)}, {\it c)}, is called {\em Dini
solution} of the HJB equation.
\end{defin}

We close this section introducing another concept of generalized
gradients, namely the proximal gradients. This will allow us to
establish a relationship between the concepts of Dini and viscosity
solutions of the HJB equation.

\begin{defin}
The {\em proximal sub-gradient} of a function $\Phi$ at $(t,x)$, denoted
by $\partial_P \Phi(t,x)$, is the set of all vectors $(\alpha,\xi) \in
\R^{1+n}$, such that there exists $\sigma >0$ and a neighborhood $U$ of
$(t,x)$ with
\begin{equation} \label{def:prox-subgrad}
\Phi(\tau,y) \ge \Phi(t,x) + \alpha (\tau-t) + \ipl \xi,y-x \ipr -
\sigma (\|\tau-t\|^2 + \|y-x\|^2) ,
\end{equation}
for all $(\tau,y) \in U$. Analogously, the {\em proximal super-gradient}
of a function $\Phi$ at $(t,x)$, denoted by $\partial^P \Phi(t,x)$, is
the set of all vectors $(\alpha,\xi) \in \R^{1+n}$, such that there
exists $\sigma >0$ and a neighborhood $U$ of $(t,x)$ with
\begin{equation} \label{def:prox-supergrad}
\Phi(\tau,y) \le \Phi(t,x) + \alpha (\tau-t) + \ipl \xi,y-x \ipr +
\sigma (\|\tau-t\|^2 + \|y-x\|^2) ,
\end{equation}
for all $(\tau,y) \in U$.
\end{defin}

        It is worth noticing that, the proximal super-gradient can be
alternatively defined by $\partial^P \Phi(t,x) = -\partial_P (-\Phi)(t,x)$.
Notice also that $\partial_P \Phi(t,x) \subset \partial_D \Phi(t,x)$.
%
%
%
\section{Monotonicity properties} \label{sec:monotonicity}
\setcounter{equation}{0}

In this section we verify some monotonicity properties related to the
solutions of HJB inequalities. We close the section with a monotonicity
property involving the value function.

Let $f, l$ be as defined in Section~\ref{sec:intro}. For this section
we suppose that assumptions {\it A1)}, \dots, {\it A5)} hold. Further,
let $\Phi:[0,\infty) \times \R^n \to \R$ be a continuous function.
The pairs $(\Phi,f)$ are called systems. A system $(\Phi,f)$ is said to
be {\em weakly decreasing} when, for every $z\in \R^n$, there exists an
admissible process $(x,u) \in {\cal D}_{0,z;\infty}$ satisfying
$\Phi(t,x(t)) \le \Phi(0,x(0))$, $t \ge 0$. We say that a system
$(\Phi,f)$ is {\em strongly increasing} in $[0,\infty)$ when for every
interval $[a,b] \subset [0,\infty)$, each admissible process $(x,u)
\in {\cal D}_{a,x(a),\infty}$ satisfy $\Phi(t,x(t)) \le \Phi(b,x(b))$,
$t \in [a,b]$. Weakly increasing and strongly decreasing systems are
defined in an analogous way.

It is easy to check that $(\Phi,f)$ is strongly increasing in $[0,\infty)$
iff the function $t \mapsto \Phi(t,x(t))$ is non-decreasing for every
admissible process $(x,u)$ of $P_{\infty}(0,x(0))$. In the sequel we
analyze a relation between the monotonicity of systems and HJB inequalities.

\begin{prop} \label{prop:monotonicity}
Let $\varphi:[0,\infty) \times \R^n \to \R$ be a continuous function and
${\cal H}$ be the Hamiltonian function in (\ref{eq:def-hamilt}). The
following assertions hold true:
\begin{description}
\item[(a)] A function $[s,\infty) \ni t \mapsto \varphi(t,x(t)) -
\int_{t}^{\infty} e^{-\delta \sigma} l \big(\sigma,x(\sigma),u(\sigma) \big)
d\sigma \in \R$ is non-decreasing along every process
$(x,u) \in {\cal D}_{s,z;\infty}$ iff
\begin{equation} \label{eq:HJB-P}
\inf\limits_{(\alpha,\xi) \in \partial_P \varphi(s,z)} \big\{
\alpha + {\cal H}(s,z,\xi)\,\big\} \ge 0,
\end{equation}
for all $(s,z) \in [0,\infty) \times \R^n$.
\item[(b)] A function $[s,\infty) \ni t \mapsto \varphi(t,x(t)) -
\int_{t}^{\infty} e^{-\delta \sigma} l \big(\sigma,x(\sigma),u(\sigma) \big)
d\sigma\in \R$ is non-increasing along every process
$(x,u) \in {\cal D}_{s,z;\infty}$ iff
\begin{equation} \label{eq:HJB-PM}
\sup\limits_{(\alpha,\xi) \in \partial_P \varphi(s,z)} \big\{
\alpha + {\hat{\cal H}}(s,z,\xi)\,\big\} \le 0,
\end{equation}
for all $(s,z) \in [0,\infty) \times \R^n$.
\item[(c)] A function $[s,\infty) \ni t \mapsto \varphi(t,x(t)) -
\int_{t}^{\infty} e^{-\delta \sigma} l \big(\sigma,x(\sigma),u(\sigma) \big)
d\sigma\in \R$ is non-increasing along some process
$(x,u) \in {\cal D}_{s,z;\infty}$ iff
\begin{equation} \label{eq:HJB-PMM}
\sup\limits_{(\alpha,\xi) \in \partial_P \varphi(s,z)} \big\{
\alpha + {\cal H}(s,z,\xi)\,\big\} \le 0,
\end{equation}
for all $(s,z) \in [0,\infty) \times \R^n$.
\end{description}
\end{prop}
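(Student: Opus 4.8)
The plan is to recognise each of the three assertions as a (weak, resp.\ strong) monotonicity statement for a scalar function along the trajectories of an autonomous differential inclusion, and then to apply the proximal characterisations of monotone systems. A harmless reduction comes first: along any $(x,u)\in{\cal D}_{s,z;\infty}$ one has, for $t\ge s$,
$$
\varphi(t,x(t))-\int_t^{\infty}\! e^{-\delta\sigma}l(\sigma,x(\sigma),u(\sigma))\,d\sigma
=\Big(\varphi(t,x(t))+\int_s^t\! e^{-\delta\sigma}l(\sigma,x(\sigma),u(\sigma))\,d\sigma\Big)-J(u;s,z),
$$
and $J(u;s,z)$ is a finite constant by the definition of ${\cal D}_{s,z;\infty}$. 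Hence the three monotonicity claims are equivalent to the same claims for $\hat g(t):=\varphi(t,x(t))+\int_s^t e^{-\delta\sigma}l(\sigma,x(\sigma),u(\sigma))\,d\sigma$, which involves only integrals over compact intervals.

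Next I would augment the state: put $q=(\tau,x,y)\in\R^{1+n+1}$ with dynamics $\dot\tau=1$, $\dot x=f(\tau,x,u)$, $\dot y=e^{-\delta\tau}l(\tau,x,u)$ and $q(s)=(s,z,0)$, and let ${\cal F}(\tau,x,y):=\{\,(1,f(\tau,x,v),e^{-\delta\tau}l(\tau,x,v))\ |\ v\in\Omega\,\}$ be the associated velocity multifunction. By {\it A3), A5)} it has nonempty compact values, by {\it A4)} convex values, by {\it A1)} it is Lipschitz in $x$ uniformly in $(\tau,y)$, by {\it A3)} continuous in $\tau$, and by {\it A2)} of linear growth. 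With $\Psi(\tau,x,y):=\varphi(\tau,x)+y$ one has $\hat g(t)=\Psi(q(t))$. Testing (\ref{def:prox-subgrad}) with only $y$ varying forces the $y$-component of the covector to equal $1$, and with $y$ frozen the inequality reduces to the one defining $\partial_P\varphi$; thus $\partial_P\Psi(\tau,x,y)=\{\,(\alpha,\xi,1)\ |\ (\alpha,\xi)\in\partial_P\varphi(\tau,x)\,\}$, and for such a covector
$$
\min_{v\in{\cal F}(\tau,x,y)}\langle(\alpha,\xi,1),v\rangle=\alpha+{\cal H}(\tau,x,\xi),\qquad
\max_{v\in{\cal F}(\tau,x,y)}\langle(\alpha,\xi,1),v\rangle=\alpha+\hat{\cal H}(\tau,x,\xi),
$$
where $\hat{\cal H}(t,x,\lbd):=\sup_{v\in\Omega}H(t,x,\lbd,v)$ is the upper Hamiltonian appearing in (\ref{eq:HJB-PM}).

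With this dictionary, assertion (a) says $(\Psi,{\cal F})$ is strongly increasing, (b) that it is strongly decreasing, and (c) that it is weakly decreasing. The necessity implications are elementary: given $\zeta=(\alpha,\xi,1)\in\partial_P\Psi(q_0)$, for (a) and (b) run a trajectory whose initial velocity is a prescribed $v\in{\cal F}(q_0)$ (use a constant control; for (a) work in the time-reversed inclusion $-{\cal F}$, along whose trajectories $\Psi$ is non-increasing when $(\Psi,{\cal F})$ is strongly increasing), while for (c) run the given non-increasing process; in every case one of the one-sided bounds $\Psi(q(t))\le\Psi(q_0)$ or $\Psi(q(t))\ge\Psi(q_0)$ holds, the proximal inequality (\ref{def:prox-subgrad}) supplies the opposite bound, and subtracting, dividing by $t$ and letting $t\downarrow 0$ (limits of difference quotients of $q$ lie in ${\cal F}(q_0)$) reads off the sign of $\langle\zeta,v\rangle$. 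Taking the appropriate $\sup$/$\inf$ over $v$ and over $(\alpha,\xi)\in\partial_P\varphi(s,z)$ yields (\ref{eq:HJB-P}), (\ref{eq:HJB-PM}) and (\ref{eq:HJB-PMM}). For the sufficiency implications one invokes the proximal monotonicity theorems (cf.\ \cite{CLSW}): from (\ref{eq:HJB-PMM}) the proximal-aiming construction produces a trajectory along which $\Psi$ does not increase, giving (c), while applying the strong monotonicity theorem to $-{\cal F}$, respectively ${\cal F}$, gives (a) and (b).

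The delicate points, and where I expect the real work, are two. First, the strong monotonicity theorem is classically stated for multifunctions locally Lipschitz in all variables, whereas ${\cal F}$ is only continuous in $\tau$; this is innocuous because the $\tau$-component of every trajectory is the fixed affine map $t\mapsto t$, so no comparison in the $\tau$-direction is ever needed — one either quotes a Carath\'eodory-type version (Lipschitz in the space variable, continuous in time) or re-derives it from (\ref{eq:dis}) and Gronwall's lemma, extending $f,l$ to $\tau<0$ by freezing at $\tau=0$ so that $-{\cal F}$ makes sense near $s=0$. Second, and this is the genuine obstacle, the infinite horizon must be reconciled with the constructions: in (c) the proximal-aiming trajectory is built on compact intervals and has to be continued to all of $[s,\infty)$ — the a priori growth estimate (\ref{eq:growth}) of Lemma~\ref{lem:xyest} rules out finite blow-up — and it must moreover have finite cost, i.e.\ belong to ${\cal D}_{s,z;\infty}$; here one uses that the non-increase of $\hat g=\Psi\circ q$ bounds $y(t)=\int_s^t e^{-\delta\sigma}l\,d\sigma$ by $\varphi(s,z)-\varphi(t,x(t))$ along the (bounded, by (\ref{eq:growth})) trajectory, which is what controls the tail of $J(u;s,z)$. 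This last bookkeeping is, I expect, the crux of the argument.
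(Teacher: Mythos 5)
Your reduction is essentially the paper's own proof. The paper likewise absorbs the cost integral into an auxiliary state variable (it uses the tail $y(t)=\int_t^\infty e^{-\delta\sigma}l\,d\sigma$ and $\phi(t,x,y)=\varphi(t,x)-y$ with extended field $\hat f=(-e^{-\delta\cdot}l,f)^T$, which differs from your running-cost variable and $\Psi=\varphi+y$ only by the constant $J(u;s,z)$, exactly the normalization you point out), computes the proximal gradient of the augmented function exactly as you do, and then settles \emph{both} directions of (a) by quoting the proximal characterization of strongly increasing systems in \cite[Exercise 4.6.4]{CLSW}, declaring (b) and (c) ``analogous''. Your write-up is in fact more explicit: you identify (b) and (c) with strong, respectively weak, decrease, and you sketch the necessity directions by hand rather than by citation.

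One concrete caution about the part you yourself call the crux. In the sufficiency of (c) you describe the proximally-aimed trajectory as ``bounded, by (\ref{eq:growth})''; but (\ref{eq:growth}) gives only the exponential estimate $|x(t)-z|\le K_2(1+R)e^{K_2(t-\tau)}|t-\tau|$, i.e.\ it excludes finite-time blow-up, not unboundedness on $[s,\infty)$. Hence the inequality $y(t)\le\varphi(s,z)-\varphi(t,x(t))$ does not by itself control the tail of the cost, and under A1)--A5) alone it is not clear that the constructed process belongs to ${\cal D}_{s,z;\infty}$ (if $K_2\ge\delta$ the cost along an arbitrary trajectory need not be finite); some additional ingredient such as A6), \~A6) or a bound of the type $K_2<\delta$ seems needed to close this step. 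A related subtlety, which you inherit from the paper rather than introduce, is that in the necessity directions the hypothesis gives monotonicity only along \emph{finite-cost} processes, whereas your constant-control comparison trajectories (and the paper's appeal to strong increase of $(\phi,\hat f)$ along \emph{all} trajectories of the augmented inclusion) tacitly assume membership in ${\cal D}_{s,z;\infty}$; the paper passes over both points in silence, so your proposal is no less complete than the published argument, but these are the places where extra care is required if (b) and (c) are actually written out.
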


\noindent
The notation ${\hat{\cal H}}$ means the substitution of the minimization
by a maximization in the definition of ${\cal H}$, i.e.,
$$ {\hat{\cal H}}(s,z,\xi) := \sup\limits_{u\in \Omega}
   \left\{\ipl \xi, f(s,z,u) \ipr + e^{-\delta s} l(s,z,u)\right\}. $$

\begin{proof}
We prove only assertion (a) of the proposition, the others being
analogous. Let $\phi: [0,\infty) \times \R^n \times \R \to \R$ be the
function defined by
$$ \phi(t,x,y) := \varphi(t,x) - y . $$
As a consequence of the proximal sub-gradient inequality, we have
\begin{equation} \label{eq:grad-p-ext}
( \alpha, \zeta, \zeta_0) \in \partial_P \phi(t,x,y) \
\Longleftrightarrow \
( \alpha, \zeta) \in \partial_P \varphi(t,x) \ \ {\rm and}\ \ \zeta_0 = -1 .
\end{equation}
Define the extended vector field $\hat{f}:=(-e^{-\delta\cdot} l, f)^T$.
Note that, because of {\it A1)}, \dots, {\it A5)}, the function $\hat{f}$
is continuous; $\hat{f}(t,x,\Omega)$ is a convex compact set for
$t,x \in [0,\infty) \times \R^n$; $\hat{f}$ satisfies a linear growth
condition with respect to $x$.

First we prove the necessity of condition (\ref{eq:HJB-P}). From the
hypothesis, follows that $\phi$ is monotone increasing along every
admissible process for $\hat{f}$. This is equivalent to the system
$(\phi,\hat{f})$ being strongly increasing. Then, it follows from
\cite[Exercise 4.6.4]{CLSW}, that
\begin{equation} \label{eq:grad-p-ext2}
\alpha + \inf\limits_{u\in \Omega} \{\ipl \zeta,f(t,x,u) \ipr -
\ipl \zeta_0, e^{-\delta t} l(t,x,u)\ipr \} \ge 0,
\end{equation}
for all $(t,x,y) \in [0,\infty) \times \R^n \times \R$, and all
$(\alpha,\zeta,\zeta_0) \in \partial_P \phi(t,x,y)$. This last inequality,
together with (\ref{eq:grad-p-ext}), guarantee that (\ref{eq:HJB-P}) is
satisfied for all $(t,x) \in [0,\infty) \times \R^n$ and all
$(\alpha,\zeta) \in \partial_P \varphi(t,x)$.

Next we prove the sufficiency of condition (\ref{eq:HJB-P}). Note that
this condition, together with (\ref{eq:grad-p-ext}), guarantee that
(\ref{eq:grad-p-ext2}) is satisfied for all $(t,x,y) \in [0,\infty)
\times \R^n \times \R$ and all $(\alpha,\zeta,\zeta_0) \in \partial_P
\phi(t,x,y)$. Now, it follows from \cite[Exercise 4.6.4]{CLSW}, that the
system $(\phi,\hat{f})$ is strongly increasing. This is equivalent to
the desired monotonicity of the function $[s,\infty) \ni t \mapsto
\varphi(t,x(t)) - \int_{t}^{\infty} e^{-\delta \sigma} l
\big(\sigma, x(\sigma), u(\sigma) \big) d\sigma \in \R$. \eop
\end{proof} \bigskip

In the sequel we verify that the real function defined by
$$ [s,\infty) \ni t \mapsto V(t,x(t)) - \int_{t}^{\infty} e^{-\delta
   \sigma} l \big(\sigma,x(\sigma),u(\sigma) \big) d\sigma\in \R $$
is non-decreasing along every admissible process $(x,u) \in {\cal
D}_{s,z;\infty}$. Choose a fixed $t \in [s,\infty)$ and $r > 0$.
Thus, for every process $(x,u) \in {\cal D}_{s,z;\infty}$, we have
\begin{eqnarray*}
& V(t+r,x(t+r)) - \int_{t+r}^{\infty} e^{-\delta \sigma}
  l(\sigma,x(\sigma),u(\sigma)) d\sigma \\
& - \left[ V(t,x(t)) - \int_{t}^{\infty} e^{-\delta \sigma}
  l(\sigma,x(\sigma),u(\sigma)) d\sigma \right] \\
& = V(t+r,x(t+r)) - V(t,x(t)) + \int_{t}^{t+r} e^{-\delta \sigma}
  l(\sigma,x(\sigma),u(\sigma)) d\sigma \ge 0;
\end{eqnarray*}
(the inequality follows from Bellman's optimality principle). The desired
mono\-tony follows now from this inequality. An immediate consequence of
this monotonicity (c.f. Proposition~\ref{prop:monotonicity}) is the fact
that the value function satisfies (\ref{eq:HJB-P}).
%
%
%
\section{HJB inequalities} \label{sec:hjb-inequal}\setcounter{equation}{0}

In this paragraph we discuss some equivalences between HJB inequalities
for proximal sub-gradients and Dini sub-gradients. We conclude the section
by characterizing the value function as a Dini solution of the HJB equation.

\begin{prop} \label{prop:equiv}
Let assumptions {\it A1)}, \dots, {\it A5)} hold. If $\Phi:[0,\infty)
\times \R^n \to \R$ is a continuous function, then
\begin{description}
\item [(a)]
\hskip-0.5cm$\sup\limits_{(\alpha,\xi) \in \partial_P \Phi(s,z)}
\!\!\!\!\big\{ \alpha + {\cal H}(s,z,\xi) \big\} \le 0$, $\forall (s,z)$
$\Longleftrightarrow$
\hskip-0.5cm$\sup\limits_{(\alpha,\xi) \in \partial_D \Phi(s,z)}
\!\!\!\!\big\{ \alpha + {\cal H}(s,z,\xi) \big\} \le 0$, $\forall (s,z)$;
\item [(b)] 
\hskip-0.5cm$\inf\limits_{(\alpha,\xi) \in \partial_P \Phi(s,z)}
\!\!\!\!\big\{ \alpha + {\cal H}(s,z,\xi) \big\} \ge 0$, $\forall (s,z)$
$\Longleftrightarrow$
\hskip-0.5cm$\inf\limits_{(\alpha,\xi) \in \partial^D \Phi(s,z)}
\!\!\!\!\big\{ \alpha + {\cal H}(s,z,\xi) \big\} \ge 0$, $\forall (s,z)$.
\end{description}
\end{prop}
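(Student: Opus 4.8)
The plan is to prove each equivalence by exploiting the inclusion $\partial_P \Phi(s,z) \subset \partial_D \Phi(s,z)$ (noted at the end of Section~\ref{sec:dini-hjb}) together with a density-type argument that lets one pass from proximal subgradients to Dini subgradients. For assertion~(a), the implication ``$\Leftarrow$'' is immediate: since every proximal subgradient is a Dini subgradient, the supremum over $\partial_D \Phi(s,z)$ dominates the supremum over $\partial_P \Phi(s,z)$, so the inequality for the larger set forces it for the smaller one. The content is in ``$\Rightarrow$''. Here I would invoke Proposition~\ref{prop:monotonicity}(b): the condition $\sup_{(\alpha,\xi)\in\partial_P\varphi(s,z)}\{\alpha+\hat{\cal H}(s,z,\xi)\}\le 0$ characterizes monotonicity of a certain trajectory-dependent functional, but more directly, the proximal version $\sup_{(\alpha,\xi)\in\partial_P\Phi(s,z)}\{\alpha+{\cal H}(s,z,\xi)\}\le 0$ is equivalent (via the argument in the proof of Proposition~\ref{prop:monotonicity}, using \cite[Exercise 4.6.4]{CLSW}) to a strong-monotonicity property of the system $(\phi,\hat f)$ with $\phi(t,x,y)=\Phi(t,x)-y$. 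Strong monotonicity is a statement about trajectories, not about which notion of subgradient one uses; so one can run the converse half of that same proof to recover the inequality for \emph{Dini} subgradients — the key point being that the subdifferential characterization of strong invariance in \cite{CLSW} holds simultaneously for proximal and for Dini (contingent/lower Dini) subgradients.

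For assertion~(b) the structure is the same, with the roles of sub- and super-gradients interchanged and infima replacing suprema; the inclusion to be used is $\partial_P\Phi \subset \partial^D\Phi$, which follows from $\partial_P\Phi\subset\partial_D\Phi$ and from the fact that, for locally Lipschitz $\Phi$, a vector lying in the proximal subdifferential also lies in the Dini superdifferential only in degenerate cases — so more carefully I would argue: ``$\Leftarrow$'' again is trivial from $\partial_P\Phi(s,z)\subset\partial_D\Phi(s,z)$ once one notes (as the text does, $\partial^D\Phi=-\partial_D(-\Phi)$ and $\partial_P\Phi=-\partial^P(-\Phi)$) that applying part~(a)'s reasoning to $-\Phi$ converts the $\sup$/$\le$ statement about $\partial^P(-\Phi)$, ${\cal H}$ built from $-\Phi$'s dynamics, into the $\inf$/$\ge$ statement about $\partial_D\Phi$. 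Concretely: replace $\Phi$ by $-\Phi$, observe ${\cal H}(s,z,-\xi)$ relates the two Hamiltonians, and note that the sign flip turns ${\cal H}=\inf_u H$ into $-\hat{\cal H}$ under $\lambda\mapsto-\lambda$; then (b) for $\Phi$ is exactly (a) for $-\Phi$ with the appropriate Hamiltonian, and the monotonicity characterization of Proposition~\ref{prop:monotonicity}(a) (the ``$\ge 0$'' version with $\partial_P$) can be upgraded to $\partial^D$ by the same invariance-theory device.

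The main obstacle I anticipate is justifying the passage from the proximal to the Dini inequality in the nontrivial direction without simply re-deriving everything: one must be sure that the characterization of strong increase/decrease of $(\phi,\hat f)$ in terms of subgradients is genuinely available for Dini subgradients and not only for proximal ones in the nonautonomous, $L^1$-control setting. The cleanest route is to cite the fact — standard in the Clarke--Ledyaev--Stern--Wolenski framework — that for the Hamilton--Jacobi characterization of weakly/strongly monotone systems the proximal and the (lower) Dini subdifferentials give the \emph{same} class of solutions, because the proximal subdifferential is dense in the Dini subdifferential along sequences (a proximal density / fuzzy sum argument), and the Hamiltonian ${\cal H}(s,z,\cdot)$ is continuous in $\xi$ by assumptions {\it A1)}--{\it A5)} (in particular {\it A3)}, {\it A5)} give continuity and the compactness needed for the infimum to behave well). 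Thus a limiting argument: given $(\alpha,\xi)\in\partial_D\Phi(s,z)$, produce $(s_k,z_k)\to(s,z)$ and $(\alpha_k,\xi_k)\in\partial_P\Phi(s_k,z_k)$ with $(\alpha_k,\xi_k)\to(\alpha,\xi)$, apply the proximal inequality at each $k$, and pass to the limit using continuity of ${\cal H}$. I would spell this out once, for (a)$\Rightarrow$, and then obtain (b)$\Rightarrow$ by the $-\Phi$ substitution described above.
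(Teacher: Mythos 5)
Your treatment of part (a) is, in its final form, the paper's own argument: the implication ``$\Leftarrow$'' is the inclusion $\partial_P \Phi(s,z) \subset \partial_D \Phi(s,z)$, and ``$\Rightarrow$'' is exactly the limiting argument you describe, using \cite[Proposition~3.4.5]{CLSW} to approximate a Dini subgradient at $(s,z)$ by proximal subgradients at nearby points and then the continuity of ${\cal H}$; no detour through Proposition~\ref{prop:monotonicity} is needed there (and note that the inequality $\sup_{\partial_P}\{\alpha+{\cal H}\}\le 0$ corresponds to item (c), not (b), of that proposition).

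The genuine gap is in part (b). Neither direction of (b) is an inclusion argument: the hypothesis involves $\partial_P\Phi$ while the conclusion involves the Dini \emph{super}differential $\partial^D\Phi$, and $\partial_P\Phi\not\subset\partial^D\Phi$ in general (for $\Phi(x)=|x|$ at $0$, $\partial_P\Phi(0)=[-1,1]$ while $\partial^D\Phi(0)=\emptyset$), so the claim that ``$\Leftarrow$ is trivial from $\partial_P\Phi\subset\partial_D\Phi$'' does not apply --- $\partial_D\Phi$ does not occur in (b) at all. More seriously, the proposed reduction ``(b) for $\Phi$ is (a) for $-\Phi$'' fails on purely algebraic grounds: using $\partial^D\Phi=-\partial_D(-\Phi)$ and $\partial_P\Phi=-\partial^P(-\Phi)$, statement (b) becomes the equivalence of $\sup_{(\alpha,\xi)\in\partial^P(-\Phi)(s,z)}\{\alpha+\tilde{\cal H}(s,z,\xi)\}\le 0$ with $\sup_{(\alpha,\xi)\in\partial_D(-\Phi)(s,z)}\{\alpha+\tilde{\cal H}(s,z,\xi)\}\le 0$, where $\tilde{\cal H}(s,z,\xi)=\max_{u\in\Omega}\{\ipl\xi,f(s,z,u)\ipr-e^{-\delta s}l(s,z,u)\}$; this mixes the proximal \emph{super}differential with the Dini \emph{sub}differential of $-\Phi$, whereas the density theorem approximates $\partial_D(-\Phi)$ by $\partial_P(-\Phi)$, about which your hypothesis says nothing. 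The missing ingredient is the dynamical bridge the paper uses: by Proposition~\ref{prop:monotonicity}(a), $\inf_{\partial_P\Phi}\{\alpha+{\cal H}\}\ge 0$ everywhere is equivalent to the non-decrease of $t\mapsto\Phi(t,x(t))-\int_t^\infty e^{-\delta\sigma}l(\sigma,x(\sigma),u(\sigma))\,d\sigma$ along every process; this is the non-increase of its negative, which, by the proximal characterization applied to $-\Phi$ with running cost $-l$, is equivalent to $\sup_{\partial_P(-\Phi)}\{\alpha+\tilde{\cal H}\}\le 0$. Only at that point, with genuine proximal subgradients of $-\Phi$ in hand, can your limiting argument be run, and $\partial_D(-\Phi)=-\partial^D\Phi$ converts back to (b). Your remark that the invariance characterization ``holds simultaneously for proximal and Dini subgradients'' gestures in this direction, but taken as a citation it essentially assumes the equivalence being proved; the monotonicity detour (or a proved, not asserted, Dini version of the monotonicity criterion) is what closes the argument.
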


\begin{proof}
We start by proving (a). The sufficiency follows from the inclusion
$\partial_P \phi(t,x) \subset \partial_D \phi(t,x)$ for all
$(t,x)$ in the domain of $\phi$. In order to prove the necessity,
take an arbitrary $(\alpha,\beta) \in \partial_D\phi(s,z)$. It
follows from \cite[Proposition~3.4.5]{CLSW} the existence of
$(\alpha_n,\beta_n) \in (\alpha,\beta) + B_{\frac{1}{n}}(0)$ and
$(s_n,z_n) \in [0,\infty)\times\R^n$, for $n \in \N$, such that
$|s_n-s| + |z_n-z| < 1/n$, $(\alpha_n,\beta_n) \in \partial_P
\phi(s_n,z_n)$ and $|\phi(s_n,z_n) - \phi(s,z)| < 1/n$. Therefore,
$$ \alpha_n + {\cal H}(s_n,z_n,\beta_n) \le 0, \quad \forall n . $$
Since ${\cal H}$ is a continuous function, the desired result follows
by taking the limit $n \to \infty$.

Next we prove (b). The first inequality in (b) is satisfied for all
$(\alpha,\xi) \in \partial_P \varphi(s,z)$ if and only if the function
$$ [s,\infty) \ni t \mapsto \varphi(t,x(t)) - \int_{t}^{\infty}
   e^{-\delta \sigma} f_0 \big(\sigma,x(\sigma),u(\sigma) \big)
   d\sigma\in \R $$
is non-decreasing along every process $(x,u) \in {\cal D}_{s,z;\infty}$
(cf. Proposition~\ref{prop:monotonicity}). This is equivalent to the fact
that
$$ [s,\infty) \ni t \mapsto -\varphi(t,x(t)) +  \int_{t}^{\infty}
   e^{-\delta \sigma} f_0 \big(\sigma,x(\sigma),u(\sigma) \big)
   d\sigma\in \R $$
is non-increasing along every process $(x,u) \in {\cal D}_{s,z;\infty}$
This, however, is equivalent to
\begin{equation} \label{eq:gns1}
 \max\limits_{(\alpha,\xi) \in \partial_P (-\varphi)(s,z)} \big\{ \alpha +
{\tilde{\cal H}}(s,z,\xi)\,\big\} \le 0,
\end{equation}
for all $(s,z) \in [0,\infty) \times \R^n$, where
$$ \tilde{\cal H}(s,z,\xi) := \max\limits_{u\in \Omega}
   \left\{ \ipl \xi, f(s,z,u) \ipr - e^{-\delta s} f_0(s,z,u) \right\} . $$
Analogous as in the proof of item (a), we use \cite[Proposition~3.4.5]{CLSW}
to conclude that (\ref{eq:gns1}) is equivalent to
\begin{equation} \label{eq:gns2}
 \max\limits_{(\alpha,\xi) \in \partial_D (-\varphi)(s,z)} \big\{ \alpha +
{\tilde{\cal H}}(s,z,\xi)\,\big\} \le 0 ,
\end{equation}
for $(s,z) \in [0,\infty) \times \R^n$. Since $\partial_D(-\varphi)(s,z)
= -\partial^D \varphi(s,z)$, it is easy to see that (\ref{eq:gns2}) is
equivalent to
$$ \min\limits_{(\alpha,\xi) \in \partial^D \varphi(s,z)}
   \big\{ \alpha + {\cal H}(s,z,\xi)\,\big\} \ge 0, $$
proving (b). \eop
\end{proof} \bigskip

We are now ready to state and prove the main result of this section.

\begin{theorem} \label{th:vf-dini-sol}
Suppose the assumptions A1), \dots, A5) and \~A6) are satisfied. Then
the value function is a Dini solution of the HJB equation.
\end{theorem}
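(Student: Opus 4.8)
The plan is to show that the value function $V$ satisfies the three defining properties of a Dini solution from Definition~\ref{def:dini-sol}: the sub-solution inequality (a), the boundary condition (b) at infinity, and the super-solution inequality (c). The continuity of $V$ needed in all three parts is already provided by Lemma~\ref{lem:vf-regular}, part (b), under the assumptions A1)--A5) and \~A6).

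First I would dispatch property (b). For each compact $K \subset \R^n$, pick $R>0$ with $K \subset \overline{B}_R(0)$; by \~A6) and the compactness argument as in the proof of Lemma~\ref{lem:vf-regular}, the admissible trajectories starting in a neighbourhood of $\{\tau\}\times K$ stay in a bounded invariant set, on which $|l|$ is bounded by A2)--A3), say by $M$. Then for $(\tau,z)$ with $z \in K$ and any admissible process, $|J(u;\tau,z)| \le M \int_\tau^\infty e^{-\delta t}\,dt = (M/\delta) e^{-\delta\tau}$, so $|V(\tau,z)| \le (M/\delta) e^{-\delta\tau} \to 0$ uniformly on $K$ as $\tau\to\infty$. (A small care point: one must ensure the invariant-set bound $M$ can be taken uniform over all large $\tau$, which follows because the neighbourhoods and invariant sets in \~A6) can be chosen locally and then patched on compacts.)

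Next, property (a), the Dini sub-solution inequality for super-gradients. The bridge has essentially been built: the discussion following Proposition~\ref{prop:monotonicity} shows that $t \mapsto V(t,x(t)) - \int_t^\infty e^{-\delta\sigma} l(\sigma,x(\sigma),u(\sigma))\,d\sigma$ is non-decreasing along every admissible process, and by Proposition~\ref{prop:monotonicity}(a) this is equivalent to
$$ \inf_{(\alpha,\xi)\in\partial_P V(s,z)} \{\alpha + {\cal H}(s,z,\xi)\} \ge 0 $$
for all $(s,z)$. Now invoke Proposition~\ref{prop:equiv}(b) to upgrade this from proximal sub-gradients to Dini super-gradients, obtaining $\alpha + {\cal H}(\tau,z,\xi) \ge 0$ for all $(\alpha,\xi)\in\partial^D V(\tau,z)$, which is exactly (\ref{eq:dini-deriv-grad-hjb}).

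Property (c), the super-solution inequality for sub-gradients, is where the real work lies and where I expect the main obstacle. One wants to show $t \mapsto V(t,x(t)) - \int_t^\infty e^{-\delta\sigma} l\,d\sigma$ is non-increasing along \emph{some} admissible process from each $(s,z)$ — which by Proposition~\ref{prop:monotonicity}(c) gives $\sup_{(\alpha,\xi)\in\partial_P V(s,z)}\{\alpha+{\cal H}(s,z,\xi)\}\le 0$, and then Proposition~\ref{prop:equiv}(a) converts this to the Dini sub-gradient statement (\ref{eq:dini+deriv-grad-hjb}). The natural candidate process is an \emph{optimal} one, whose existence is guaranteed by Theorem~\ref{satz:exist} (already used for part (a) of Lemma~\ref{lem:vf-regular}); along an optimal process Bellman's principle gives $V(s,z) = \int_s^t e^{-\delta\sigma} l\,d\sigma + V(t,x(t))$ for all $t\ge s$, i.e. $V(t,x(t)) - \int_t^\infty e^{-\delta\sigma}l\,d\sigma = V(s,z) - \int_s^\infty e^{-\delta\sigma}l\,d\sigma$ is in fact \emph{constant}, hence certainly non-increasing. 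The delicate point — the main obstacle — is verifying that Proposition~\ref{prop:monotonicity}(c) truly applies: its hypothesis is about the existence of a process in ${\cal D}_{s,z;\infty}$ with the monotonicity, and one must check that the optimal process furnished by the existence theorem lies in ${\cal D}_{s,z;\infty}$ (finite cost) and that the "some process" formulation in part (c) is exactly what the sub-solution characterization needs. Once these matching conditions are confirmed, properties (a), (b), (c) together establish that $V$ is a Dini solution, completing the proof. \eop
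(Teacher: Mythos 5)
Your proposal is correct and follows essentially the same route as the paper: the decay condition via the discount factor and bounded trajectories, the sub-solution inequality via the monotonicity of $t \mapsto V(t,x(t)) - \int_t^\infty e^{-\delta\sigma} l\,d\sigma$ established after Proposition~\ref{prop:monotonicity} combined with Proposition~\ref{prop:equiv}(b), and the super-solution inequality via constancy along an optimal process (Theorem~\ref{satz:exist}) together with Proposition~\ref{prop:monotonicity}(c) and Proposition~\ref{prop:equiv}(a). The matching conditions you flag at the end are indeed unproblematic, exactly as the paper treats them.
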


\begin{proof}
First we prove that the value function satisfies the decay condition in
Definition~\ref{def:dini-sol}. Since $V$ is locally Lipschitz (see
Lemma~\ref{lem:vf-regular}), it is enough to prove that
$\lim_{\tau\to\infty} V(\tau,z) = 0$, for each $z \in \R^n$. Let $z \in
\R^n$ and $\eps > 0$ be given. For each $\tau \ge 0$, choose a process
$(x_{\eps,\tau}(\cdot),u_{\eps,\tau}(\cdot)) \in {\cal
D}_{\tau,z;\infty}$ satisfying
$$ J(u_{\eps,\tau};\tau,z) - \eps \le V(\tau,z) \le
   J(u_{\eps,\tau};\tau,z) . $$
We now prove that
\begin{equation} \label{eq:lim-J}
\lim_{\tau\to\infty} J(u_{\eps,\tau};\tau,z) = 0 .
\end{equation}
Indeed, from {\it A2)} and the Gronwall lemma we obtain
$|x_{\eps,\tau}(t)| \le M$ for $t \ge \tau$. Hence,
$$ J(u_{\eps,\tau};\tau,z) \le \int_\tau^\infty e^{-\delta t}
   K_2 (1+M) dt  \ < \ \infty $$
and (\ref{eq:lim-J}) follows.

        Next we prove that the value function satisfies
(\ref{eq:dini-deriv-grad-hjb}) and (\ref{eq:dini+deriv-grad-hjb}). As
already observed at the final part of Section~\ref{sec:monotonicity}, the
value function $V$ satisfies (\ref{eq:HJB-P}), which corresponds to
the first inequality of Proposition~\ref{prop:equiv} (b). Thus, $V$
also satisfies the second inequality, which corresponds to
(\ref{eq:dini-deriv-grad-hjb}) (i.e. $V$ is a Dini sub-solution).

Next we prove that $V$ is a Dini super-solution. It is enough to prove that
$V$ satisfies the first inequality of Proposition~\ref{prop:equiv} (a). Let
$(s,z) \in [0,\infty)\times \R^n$ be given. Under assumptions {\it A1)},
\dots, {\it A6)}, problem $P_{\infty}(s,z)$ has an optimal solution
(see Appendix). Take $(x,u)$ an optimal process for this problem. Thus,
the function $[s,\infty) \ni t \mapsto V(t,x(t)) - \int_{t}^{\infty}
e^{-\delta \sigma} l \big(\sigma,x(\sigma),u(\sigma) \big) d\sigma \in \R$
is constant. The desired inequality follows now from Proposition~%
\ref{prop:monotonicity} (c). \eop
\end{proof}
%
%
%
\section{Verification functions and Minimax results} \label{sec:verif-mm}
\setcounter{equation}{0}

In the first part of this section we use the Dini sub-solutions of the
HJB equation in order to verify optimality of admissible processes.
In the second part we concentrate our attention on some
extremal properties of the value function, which involve the classes of
Dini sub-solutions (respectively super-solutions) of the HJB equation.

\begin{prop} \label{prop:verif}
Let $(\bar{x},\bar{u})\in {\cal D}_{s,z;\infty}$ be given. If assumptions
{\it A1)}, \dots, {\it A5)} hold and there exists a Dini sub-solution
$\Phi$ of the HJB equation such that
\begin{equation} \label{eq:verif}
 \Phi(s,z) = \int_s^{\infty} e^{-\delta t} l(t,\bar x(t),\bar u(t)) dt ,
\end{equation}
then $(\bar x,\bar u)$ is an optimal process for problem $P_{\infty}(s,z)$.
\end{prop}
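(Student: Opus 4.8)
The plan is to show that the value function $V$ is itself a Dini sub-solution (which we proved in Theorem~\ref{th:vf-dini-sol}), that $\Phi \le V$ for \emph{any} Dini sub-solution, and finally that the identity (\ref{eq:verif}) forces $\Phi(s,z) = V(s,z)$ and that $(\bar x,\bar u)$ attains this value. The key monotonicity input is Proposition~\ref{prop:monotonicity}(a) combined with Proposition~\ref{prop:equiv}(b): a Dini sub-solution $\Phi$ satisfies (\ref{eq:dini-deriv-grad-hjb}), i.e. $\alpha + {\cal H}(\tau,z,\xi)\ge 0$ for all $(\alpha,\xi)\in\partial^D\Phi(\tau,z)$, which by Proposition~\ref{prop:equiv}(b) is equivalent to the proximal inequality (\ref{eq:HJB-P}); hence, by Proposition~\ref{prop:monotonicity}(a), the function
$$ t \mapsto \Phi(t,x(t)) - \int_t^{\infty} e^{-\delta\sigma} l(\sigma,x(\sigma),u(\sigma))\,d\sigma $$
is non-decreasing along \emph{every} process $(x,u)\in{\cal D}_{s,z;\infty}$.

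First I would apply this monotonicity along an arbitrary process $(x,u)\in{\cal D}_{s,z;\infty}$ on the interval $[s,\tau]$: evaluating the non-decreasing function at $s$ and at $\tau$ yields
$$ \Phi(s,z) - \int_s^{\infty} e^{-\delta\sigma} l(\sigma,x(\sigma),u(\sigma))\,d\sigma \ \le\ \Phi(\tau,x(\tau)) - \int_{\tau}^{\infty} e^{-\delta\sigma} l(\sigma,x(\sigma),u(\sigma))\,d\sigma. $$
Now I let $\tau\to\infty$. The boundedness of the trajectories (assumption {\it A6)}, or {\it\~A6)}, giving $x(t)\in{\cal S}_{s,z}$) together with the growth condition {\it A2)} shows $\int_{\tau}^{\infty} e^{-\delta\sigma} l(\sigma,x(\sigma),u(\sigma))\,d\sigma \to 0$; and the decay condition {\it b)} in Definition~\ref{def:dini-sol}, applied on the compact set $\overline{{\cal S}_{s,z}}$, gives $\Phi(\tau,x(\tau))\to 0$. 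Hence the right-hand side tends to $0$, which yields
$$ \Phi(s,z) \ \le\ \int_s^{\infty} e^{-\delta\sigma} l(\sigma,x(\sigma),u(\sigma))\,d\sigma \ =\ J(u;s,z). $$
Taking the infimum over all $(x,u)\in{\cal D}_{s,z;\infty}$ gives $\Phi(s,z)\le V(s,z)$.

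Finally I would combine this with the hypothesis (\ref{eq:verif}): since $(\bar x,\bar u)\in{\cal D}_{s,z;\infty}$ is one admissible process, $V(s,z)\le J(\bar u;s,z) = \int_s^{\infty} e^{-\delta t} l(t,\bar x(t),\bar u(t))\,dt = \Phi(s,z)\le V(s,z)$, so all inequalities are equalities; in particular $J(\bar u;s,z) = V(s,z)$, which by Definition~\ref{def:vf} means $(\bar x,\bar u)$ is optimal for $P_{\infty}(s,z)$. The step I expect to require the most care is the passage to the limit $\tau\to\infty$ on the right-hand side: one must be sure that the relevant trajectories remain in a fixed bounded (compact-closure) set so that the uniform decay in {\it b)} applies and the tail of the integral vanishes — this is exactly where the invariant-set assumption {\it A6)} is used, and it should be stated explicitly in the proof.
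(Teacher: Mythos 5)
Your argument is essentially the paper's own proof: both deduce from Proposition~\ref{prop:equiv}(b) together with Proposition~\ref{prop:monotonicity}(a) that $t \mapsto \Phi(t,x(t)) - \int_t^{\infty} e^{-\delta \sigma} l(\sigma,x(\sigma),u(\sigma))\,d\sigma$ is non-decreasing along every admissible process, pass to the limit $t \to \infty$ using the decay property of $\Phi$, and obtain $\Phi(s,z) \le J(u;s,z)$ for all $(x,u)\in{\cal D}_{s,z;\infty}$, which combined with (\ref{eq:verif}) yields optimality (the paper compares costs directly instead of routing through $V$, a purely cosmetic difference). Your explicit observation that one needs the trajectories to stay in a fixed bounded set (via {\it A6)} or {\it \~A6)}) in order to apply the uniform-on-compacts decay to $\Phi(\tau,x(\tau))$ is well taken: the paper's proof uses this silently even though only {\it A1)}--{\it A5)} appear in the hypotheses.
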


\begin{proof}
Let $\Phi$ be a function satisfying the assumptions. Given an
arbitrary admissible process $({x},{u}) \in {\cal D}_{s,z;\infty}$, it
follows from Proposition~\ref{prop:equiv} (b) and Proposition~%
\ref{prop:monotonicity} (a) that
$$ t \mapsto \Phi(t,x(t)) - \int_t^{\infty} e^{-\delta r}
   l(r, x(r), u(r)) dr $$
is a monotone non-decreasing function. Thus, for all $t > s$, we have
$$ \Phi(t,x(t)) - \int_t^{\infty} e^{-\delta r} l(r, x(r), u(r)) dr \ge
   \Phi(s,x(s)) - \int_s^{\infty} e^{-\delta r} l(r, x(r), u(r)) dr . $$
From a simple algebraic calculation we deduce
\begin{eqnarray*}
\Phi(t,x(t)) + \int_s^t e^{-\delta r} l(r, x(r), u(r)) dr \ge \Phi(s,x(s)).
\end{eqnarray*}
Taking the limit $t \to \infty$ and using the decay property of $\Phi$,
we conclude that
\begin{eqnarray*}
\int_s^\infty e^{-\delta r} l(r, x(r), u(r)) dr \ge
\Phi(s,x(s)) = \int_s^\infty e^{-\delta r} l(r,\bar x(r),\bar u(r))ds ,
\end{eqnarray*}
proving the optimality of $(\bar x,\bar u)$. \eop
\end{proof} \bigskip

A function $\Phi$ satisfying the assumptions of the above proposition
is called a {\em verification function} for process $(\bar x,\bar u)$.
One should note that the value function $V$ (which is a Dini sub-solution
of the HJB equation from Theorem~\ref{th:vf-dini-sol}) is a verification
function for every optimal process. Summarizing all these facts we have

\begin{prop}
Under assumptions {\it A1)}, \dots, {\it A5)} and {\it \~A6)}, an
admissible process $(\bar{x},\bar{u})\in {\cal D}_{s,z;\infty}$ is
optimal iff there exists
a corresponding verification function.
\end{prop}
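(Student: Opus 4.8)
The statement to prove is an ``iff'' characterization of optimality via verification functions. One direction is already essentially done: if $(\bar x,\bar u)\in\mathcal{D}_{s,z;\infty}$ admits a verification function, then Proposition~\ref{prop:verif} gives optimality directly (note that Proposition~\ref{prop:verif} only needs {\it A1)}--{\it A5)}, which are included in the present hypotheses). So the plan is to concentrate on the converse: if $(\bar x,\bar u)$ is optimal, exhibit a verification function for it.

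The natural candidate is the value function $V$ itself. First I would invoke Theorem~\ref{th:vf-dini-sol}, which under {\it A1)}--{\it A5)} and {\it \~A6)} guarantees that $V$ is a Dini solution of the HJB equation; in particular $V$ is a Dini sub-solution, so it satisfies the structural requirements (continuity, the proximal/Dini sub-gradient inequality, and the decay condition {\it b)}) demanded of a verification function. It remains only to check the matching condition \eqref{eq:verif}, namely $V(s,z)=\int_s^{\infty}e^{-\delta t}l(t,\bar x(t),\bar u(t))\,dt = J(\bar u;s,z)$. But this is exactly the statement that $(\bar x,\bar u)$ attains the infimum defining $V(s,z)$ in Definition~\ref{def:vf}, which is precisely the optimality hypothesis. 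Hence $V$ is a verification function for $(\bar x,\bar u)$, completing this direction.

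Assembling the two halves: the ``if'' direction is Proposition~\ref{prop:verif}, and the ``only if'' direction takes $\Phi=V$, using Theorem~\ref{th:vf-dini-sol} to know $V$ is a Dini sub-solution and the optimality of $(\bar x,\bar u)$ to verify \eqref{eq:verif}. I would also remark, as the paragraph preceding the statement already anticipates, that this shows $V$ is simultaneously a verification function for \emph{every} optimal process of $P_\infty(s,z)$.

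I do not expect a genuine obstacle here, since both directions reduce to results already established; the only point requiring a word of care is making sure the hypotheses line up — specifically that Proposition~\ref{prop:verif} is available under {\it A1)}--{\it A5)} alone (it is) and that Theorem~\ref{th:vf-dini-sol}, which does require {\it \~A6)}, is invoked only in the converse direction where {\it \~A6)} is assumed. So the proof is essentially a two-line citation of Proposition~\ref{prop:verif} and Theorem~\ref{th:vf-dini-sol} together with the observation that optimality of $(\bar x,\bar u)$ is literally the identity $V(s,z)=J(\bar u;s,z)$.
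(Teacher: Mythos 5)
Your proposal is correct and follows exactly the route the paper intends: the ``if'' direction is Proposition~\ref{prop:verif}, and for ``only if'' one takes $\Phi=V$, which is a Dini sub-solution by Theorem~\ref{th:vf-dini-sol} and satisfies \eqref{eq:verif} precisely because optimality of $(\bar x,\bar u)$ means $V(s,z)=J(\bar u;s,z)$. The paper gives no separate proof, presenting the proposition as a summary of these same facts, so your write-up matches it.
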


        In the sequel we investigate some minimax properties of the value
function with respect to the sets of Dini semi-solutions of the HJB equation.

\begin{prop} \label{prop:minimax}
Under assumptions {\it A1)}, \dots, {\it A6)}, the value function
possesses the following maximal property
$$ V(s,z) \ge \max\{ \Phi(s,z);\ \Phi \mbox{ is a Dini sub-solution} \}. $$
Furthermore, the value function possesses the minimal property
$$ V(s,z) \le \min\{ \Phi(s,z);\ \Phi \mbox{ is a Dini super-solution} \}. $$
\end{prop}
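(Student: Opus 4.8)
The plan is to prove the two extremal inequalities separately, in each case exploiting the monotonicity machinery of Proposition~\ref{prop:monotonicity} together with the decay condition {\it b)} from Definition~\ref{def:dini-sol} and the optimality principle. For the maximal property, let $\Phi$ be any Dini sub-solution and fix $(s,z) \in [0,\infty) \times \R^n$. By Proposition~\ref{prop:equiv}~(b), since $\Phi$ is a Dini sub-solution it satisfies the proximal inequality (\ref{eq:HJB-P}), hence by Proposition~\ref{prop:monotonicity}~(a) the map $t \mapsto \Phi(t,x(t)) - \int_t^{\infty} e^{-\delta r} l(r,x(r),u(r))\,dr$ is non-decreasing along every admissible process $(x,u) \in {\cal D}_{s,z;\infty}$. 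Evaluating this monotonicity between $s$ and a general $t>s$, rearranging exactly as in the proof of Proposition~\ref{prop:verif}, and letting $t \to \infty$ while invoking the decay property {\it b)} of $\Phi$ on the bounded invariant set ${\cal S}_{s,z}$ (which contains $x(t)$ for all $t \ge s$ by {\it A6)}), I obtain
$$ \Phi(s,z) \le \int_s^{\infty} e^{-\delta r} l(r,x(r),u(r))\,dr = J(u;s,z) . $$
Since this holds for every $(x,u) \in {\cal D}_{s,z;\infty}$, taking the infimum over such processes yields $\Phi(s,z) \le V(s,z)$. The maximum is actually attained because $V$ is itself a Dini sub-solution by Theorem~\ref{th:vf-dini-sol}.

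For the minimal property, let $\Phi$ be any Dini super-solution and fix $(s,z)$. By assumptions {\it A1)}--{\it A6)} (note the use of {\it A4)}, {\it A5)} here rather than {\it \~A6)}) problem $P_{\infty}(s,z)$ admits an optimal process $(\bar x,\bar u)$, by the existence result in the Appendix (Theorem~\ref{satz:exist}). Condition {\it c)} of Definition~\ref{def:dini-sol} for $\Phi$ is, via $\partial_D \Phi = -\partial^D(-\Phi)$ and Proposition~\ref{prop:equiv}~(a) applied to $-\Phi$ (equivalently, directly via Proposition~\ref{prop:monotonicity}~(b) after passing through the proximal inequality), equivalent to the statement that $t \mapsto \Phi(t,x(t)) - \int_t^{\infty} e^{-\delta r} l(r,x(r),u(r))\,dr$ is non-increasing along every admissible process. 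Applying this to the optimal process $(\bar x,\bar u)$, evaluating between $s$ and $t$, letting $t\to\infty$ and again using the decay property {\it b)} of $\Phi$ (the trajectory $\bar x$ remaining in the bounded set ${\cal S}_{s,z}$), I get
$$ \Phi(s,z) \ge \int_s^{\infty} e^{-\delta r} l(r,\bar x(r),\bar u(r))\,dr = J(\bar u;s,z) = V(s,z) , $$
the last equality by optimality of $(\bar x,\bar u)$. This gives $V(s,z) \le \Phi(s,z)$ for every Dini super-solution $\Phi$, and the minimum is attained since $V$ is also a Dini super-solution.

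The main obstacle is bookkeeping rather than conceptual: one must be careful that the decay condition {\it b)} is used correctly, namely on a \emph{compact} set containing the relevant trajectory — this is precisely where {\it A6)} (existence of the bounded invariant set ${\cal S}_{s,z}$) enters, and it is the reason the full assumption {\it A6)} rather than the local version {\it \~A6)} is needed for the minimal property, since there one needs existence of an optimal process at the single point $(s,z)$ and confinement of that process. A second minor point to get right is the direction of the monotonicity in the super-solution case: one should verify that {\it c)} of Definition~\ref{def:dini-sol}, after translating through $\partial_D \Phi(t,x) = -\partial^D(-\Phi)(t,x)$ and the equivalences of Proposition~\ref{prop:equiv}, indeed delivers the \emph{non-increasing} conclusion of Proposition~\ref{prop:monotonicity}~(b) and not its converse. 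Once these two matchings are pinned down, both inequalities follow by the same three-line rearrangement-and-limit argument already used in Proposition~\ref{prop:verif}.
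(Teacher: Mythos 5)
Your argument for the maximal property is correct and is essentially the paper's own: Dini sub-solution $\Rightarrow$ (\ref{eq:HJB-P}) via Proposition~\ref{prop:equiv}~(b), then Proposition~\ref{prop:monotonicity}~(a) gives the non-decreasing map along \emph{every} process, and the limit $t\to\infty$ with the decay condition on the invariant set yields $\Phi(s,z)\le J(u;s,z)$; the paper phrases this with $\eps$-suboptimal processes rather than an infimum over all processes, which is an immaterial difference.

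The minimal property part, however, contains a genuine error. Condition {\it c)} of Definition~\ref{def:dini-sol} translates (via Proposition~\ref{prop:equiv}~(a)) into the proximal inequality (\ref{eq:HJB-PMM}), which involves the \emph{infimum} Hamiltonian ${\cal H}$. By Proposition~\ref{prop:monotonicity}~(c) this is equivalent to the map $t \mapsto \Phi(t,x(t)) - \int_t^{\infty} e^{-\delta r} l\,dr$ being non-increasing along \emph{some} admissible process, not along every one. Non-increase along every process is Proposition~\ref{prop:monotonicity}~(b) and requires the strictly stronger inequality (\ref{eq:HJB-PM}) with $\hat{\cal H}$, which a Dini super-solution need not satisfy (since ${\cal H}\le\hat{\cal H}$, the implication you invoke goes the wrong way). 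Consequently your application of the monotonicity to the \emph{optimal} process $(\bar x,\bar u)$ is unjustified: the process furnished by Proposition~\ref{prop:monotonicity}~(c) need not be the optimal one. The repair is the paper's route, and it is simpler than yours: take the process $(x,u)\in{\cal D}_{s,z;\infty}$ given by Proposition~\ref{prop:monotonicity}~(c), evaluate the non-increasing map between $s$ and $t$, let $t\to\infty$ using the decay of $\Phi$, and conclude $\Phi(s,z)\ge J(u;s,z)\ge V(s,z)$, the last inequality holding because $V$ is an infimum over admissible processes. In particular no optimal process and no appeal to Theorem~\ref{satz:exist} is needed for this inequality; the existence theorem is used in the paper only to know that $V$ is well defined.
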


\begin{proof}
It follows from Theorem~\ref{satz:exist} that the value function is
well defined.
Let $\Phi$ be a Dini sub-solution of the HJB equation. From item (b) of
Proposition~\ref{prop:equiv}, we conclude that $\Phi$ satisfies
(\ref{eq:HJB-P}). Thus, from item (a) of Proposition~\ref{prop:monotonicity},
it follows that the function
$$ [s,\infty) \ni t \mapsto \Phi(t,x(t)) - \int_{t}^\infty
   e^{-\delta \sigma} l \big(\sigma,x(\sigma),u(\sigma) \big)
   d\sigma \in \R $$
is non-decreasing along any process $(x,u) \in {\cal D}_{s,z;\infty}$, for
all $(s,z) \in [0,\infty) \times \R^n$. Then, for fixed $(s,z)$ and a
sub-optimal process $(x_\eps,u_\eps) \in {\cal D}_{s,z;\infty}$ we have
\begin{eqnarray*}
 \Phi(t,x_\eps(t)) - \int_{t}^\infty e^{-\delta \sigma}
   l \big(\sigma,x_\eps(\sigma),u_\eps(\sigma) \big) d\sigma & \ge &
   \Phi(s,z) - J(u_\eps; s,z) \\
   & \ge & \Phi(s,z) - V(s,z) - \eps .
\end{eqnarray*}
If we take the limit $t \to \infty$ and use the decay property of $\Phi$,
we obtain $\Phi(s,z) \le V(s,z) + \eps$, proving the desired inequality.

        Now, let $\Phi$ be a Dini super-solution of the HJB equation.
From item (a) of Proposition~\ref{prop:equiv}, we conclude that $\Phi$
satisfies (\ref{eq:HJB-PMM}). Thus, from item (c) of Proposition~%
\ref{prop:monotonicity}, follows the existence of a process $(x,u) \in
{\cal D}_{s,z;\infty}$ (for each $(s,z)$), such that the function
$$ [s,\infty) \ni t \mapsto \Phi(t,x(t)) - \int_{t}^\infty
   e^{-\delta \sigma} l \big(\sigma,x(\sigma),u(\sigma) \big)
   d\sigma \in \R $$
is non-increasing. Then, for fixed $(s,z)$, we have
\begin{eqnarray*}
 \Phi(t,x(t)) - \int_{t}^\infty e^{-\delta \sigma}
   l \big(\sigma,x(\sigma),u(\sigma) \big) d\sigma & \le &
   \Phi(s,z) - J(u_\eps; s,z) \\
   & \le & \Phi(s,z) - V(s,z) .
\end{eqnarray*}
Next, we take the limit $t\to \infty$ in the above inequality and
obtain $\Phi(s,z) \ge V(s,z)$, concluding the proof. \eop
\end{proof} \bigskip

        Since the value function is a Dini solution of the HJB equation,
Proposition~\ref{prop:minimax} can be restated in the following form:

\begin{corol} \label{cor:minimax}
Under assumptions {\it A1)}, \dots, {\it \~A6)}, the value function is
the maximal Dini sub-solution (respectively minimal Dini super-solution)
of the HJB equation.
\end{corol}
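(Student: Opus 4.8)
The plan is to obtain the corollary as an immediate synthesis of Theorem~\ref{th:vf-dini-sol} and Proposition~\ref{prop:minimax}, the only point requiring a moment's care being the compatibility of the hypotheses. First I would observe that assumptions {\it A1)}, \dots, {\it A5)} together with {\it \~A6)} imply {\it A6)}: specializing the neighborhood condition in {\it \~A6)} to the initial condition $(\tilde\tau,\tilde z) = (\tau,z) \in {\cal V}_{\tau,z}$ yields precisely the invariant-set statement of {\it A6)} for that point, and $(\tau,z)$ was arbitrary. Hence under {\it A1)}, \dots, {\it \~A6)} both Theorem~\ref{th:vf-dini-sol} and Proposition~\ref{prop:minimax} are available.

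Next, by Theorem~\ref{th:vf-dini-sol} the value function $V$ is a Dini solution of the HJB equation, so by Definition~\ref{def:dini-sol} it is simultaneously a Dini sub-solution and a Dini super-solution. On the other hand, Proposition~\ref{prop:minimax} furnishes, for every $(s,z) \in [0,\infty)\times\R^n$, the two bounds
$$ V(s,z) \ge \max\{\Phi(s,z) : \Phi \text{ a Dini sub-solution}\}, \qquad
   V(s,z) \le \min\{\Phi(s,z) : \Phi \text{ a Dini super-solution}\}. $$
Since $V$ itself belongs to the family of Dini sub-solutions, the first inequality is in fact an equality with the maximum attained at $\Phi = V$; that is, $V$ is the (pointwise) maximal Dini sub-solution. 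Symmetrically, since $V$ is a Dini super-solution, the second inequality is an equality attained at $\Phi = V$, so $V$ is the minimal Dini super-solution. This establishes the corollary.

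There is no substantive obstacle here: the content has already been done in Sections~\ref{sec:monotonicity}--\ref{sec:verif-mm}. The only thing worth flagging explicitly in the write-up is the hypothesis bookkeeping noted above, so that the reader sees why invoking Proposition~\ref{prop:minimax} (stated under {\it A1)}--{\it A6)}) is legitimate under the corollary's hypotheses {\it A1)}--{\it \~A6)}. If one wished, one could additionally remark that this extremal characterization immediately yields uniqueness of Dini solutions, since any Dini solution is both a Dini sub- and super-solution and is therefore squeezed between the minimal super-solution and the maximal sub-solution, both of which equal $V$.
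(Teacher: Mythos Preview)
Your proposal is correct and matches the paper's own treatment: the corollary is obtained directly by combining Theorem~\ref{th:vf-dini-sol} (the value function is a Dini solution) with Proposition~\ref{prop:minimax}, so that $V$ attains the extremum in both inequalities. Your explicit remark that {\it \~A6)} implies {\it A6)} (hence Proposition~\ref{prop:minimax} applies) is a useful piece of bookkeeping the paper leaves implicit, and your closing observation on uniqueness is exactly what the paper records immediately after the corollary.
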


        An immediate consequence of Corollary~\ref{cor:minimax} is the
uniqueness of Dini solutions for the HJB equation. %
%
%
\section{Viscosity solutions} \label{sec:visc-sol}\setcounter{equation}{0}

In this section we introduce the concept of viscosity solutions of the
HJB equation. The main result of this section is an equivalence proof
between the concepts of Dini and viscosity solutions of the HJB equation.

\begin{defin} \label{def:visko}
A function $v: [0,\infty) \times \R^n \to \R$ is called {\em viscosity
solution} of the partial differential equation
\begin{equation} \label{eq:hjbg1}
\partial_t v(t,x) \, + \, {\cal H}(t,x,\partial_x v(t,x)) \ = \ 0\, ,
\ (t,x) \in (0,\infty) \times \R^n\,,
\end{equation}
if it satisfies the following conditions:
\begin{itemize}
\item [\it a) ] $v$ is continuous and $v(\tau,z) \to 0$, as
$\tau \to \infty$, uniformly in each compact $K \subset \R^n$;

\item [\it b) ] For every test function $\phi \in C^1((0,\infty) \times
\R^n; \R)$, such that $v - \phi$ has a local maximum at $(t,x) \in (0,\infty)
\times \R^n$, we have
\begin{equation} \label{dev:visc-sub}
\partial_t \phi(t,x)\, +\, {\cal H}(t,x,\partial_x\phi(t,x))\ \ge\ 0 ;
\end{equation}
\item [\it c) ] For every test function $\phi \in C^1((0,\infty) \times
\R^n; \R)$, such that $v - \phi$ has a local minimum at $(t,x) \in (0,\infty)
\times \R^n$, we have
\begin{equation} \label{dev:visc-sup}
\partial_t \phi(t,x)\, +\, {\cal H}(t,x,\partial_x\phi(t,x))\ \le\ 0 .
\end{equation}
\end{itemize}
If the function $v$ satisfies only {\it a)} and {\it b)}, it is called {\em
viscosity sub-solution}. If $V$ satisfies only {\it a)} and {\it c)}, it is
called {\em viscosity super-solution}.
\end{defin}

        Next we verify that a function is a viscosity solution iff it
is a Dini solution of the HJB equation.

\begin{theorem} \label{theor:dini-visc}
The following assertions hold true: \\
(i) A function $v$ is a viscosity sub-solution of the HJB equation iff
\begin{enumerate}
\item[(a)] for all $(s,z) \in [0,\infty) \times \R^n$,
\begin{equation}\label{super-HJB-D}
\big\{ \alpha + {\cal H}(s,z,\xi) \big\} \ \ge \ 0, \quad \forall
(\alpha,\xi)\in \partial^D v(s,z);
\end{equation}
\item[(b)] $\lim\limits_{s\uparrow \infty} v(s,z) = 0$ uniformly
for $z$ in compact subsets of $\R^n$.
\end{enumerate}
(ii) A function $v$ is a viscosity super-solution of the HJB equation iff
it satisfies assertion (b) of item (i) and, for all $(s,z) \in [0,\infty)
\times \R^n$,
we have
\begin{equation}
\big\{ \alpha + {\cal H}(s,z,\xi) \big\} \ \le \ 0, \quad \forall
(\alpha,\xi)\in \partial_D v(s,z).
\end{equation}
\end{theorem}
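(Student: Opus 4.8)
The plan is to prove (i) — the equivalence for sub-solutions — in detail, and then obtain (ii) by the standard duality trick, replacing $v$ by $-v$ and ${\cal H}$ by $\hat{\cal H}$ (or, more precisely, using that $\partial^D v = -\partial_D(-v)$ together with the observation that a viscosity super-solution of the equation for $v$ corresponds to a viscosity sub-solution of the equation for $-v$). Since condition (b) appears verbatim in both the Dini and viscosity definitions and is independent of the differential inequality, I would dispose of it in one sentence and concentrate entirely on the equivalence between the viscosity inequality (\ref{dev:visc-sub}) and the Dini-supergradient inequality (\ref{super-HJB-D}).

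The heart of the matter is the classical identity, valid for any continuous function $v$, between the proximal/viscosity supergradients and the Dini supergradient, namely
\begin{equation*}
\partial^D v(t,x) \; = \; \bigl\{ (\partial_t\phi(t,x),\partial_x\phi(t,x)) \ : \ \phi \in C^1, \ v - \phi \text{ has a local max at } (t,x) \bigr\}
\end{equation*}
— or at least the two inclusions that make (\ref{dev:visc-sub}) and (\ref{super-HJB-D}) equivalent once ${\cal H}$ is continuous. For the direction ``viscosity sub-solution $\Rightarrow$ (a)'': given $(\alpha,\xi)\in\partial^D v(s,z)$, one has $D^+v(s,z;u,w)\le u\alpha+\langle w,\xi\rangle$ for all directions, and from this one constructs (this is where the work is) a $C^1$ test function $\phi$ with $\phi(s,z)=v(s,z)$, $\nabla\phi(s,z)=(\alpha,\xi)$, and $v-\phi$ having a local maximum at $(s,z)$; applying (\ref{dev:visc-sub}) to $\phi$ gives (\ref{super-HJB-D}). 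For the converse, given a test function $\phi$ with $v-\phi$ attaining a local max at $(t,x)$, the pair $(\partial_t\phi(t,x),\partial_x\phi(t,x))$ lies in $\partial^D v(t,x)$ — this is immediate from the definition of $D^+$ and the differentiability of $\phi$ — so (\ref{super-HJB-D}) at $(t,x)$ yields (\ref{dev:visc-sub}). Both halves are entirely local, so the fact that the domain includes the boundary $\{0\}\times\R^n$ causes no difficulty beyond noting that at $s=0$ the directional derivatives and test-function comparisons are taken within $[0,\infty)\times\R^n$; alternatively one restricts attention to the interior $(0,\infty)\times\R^n$ as in Definition~\ref{def:visko} and uses continuity of $v$ up to the boundary, which follows from Lemma~\ref{lem:vf-regular} when $v=V$, and is simply part of the hypothesis ``$v$ continuous'' in general.

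The step I expect to be the main obstacle is the \emph{construction of the $C^1$ test function} realizing a given Dini supergradient as the gradient of a smooth majorant touching at the point. The naive candidate $\phi(\tau,y)=v(s,z)+\alpha(\tau-s)+\langle\xi,y-z\rangle$ is only a first-order model and need not lie above $v-$(something smooth) unless one adds a correction term absorbing the (possibly merely $o(r)$, not $O(r^2)$) error in the Dini estimate. The clean way around this is to invoke the known \emph{density/fuzzy} relation between Dini and proximal subgradients — precisely the tool already used twice in this paper, \cite[Proposition~3.4.5]{CLSW} — to reduce an arbitrary Dini supergradient at $(s,z)$ to a sequence of proximal supergradients at nearby points $(s_n,z_n)\to(s,z)$ with $v(s_n,z_n)\to v(s,z)$; a proximal supergradient, by (\ref{def:prox-supergrad}), \emph{does} admit an honest $C^1$ (indeed quadratic) test function, so (\ref{dev:visc-sub}) applies at each $(s_n,z_n)$, and continuity of ${\cal H}$ closes the argument by passage to the limit $n\to\infty$. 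This mirrors exactly the proof of Proposition~\ref{prop:equiv}, and in fact the cleanest packaging of the whole theorem is: viscosity sub-solution $\Leftrightarrow$ the proximal-supergradient HJB inequality (direct from the definitions, since proximal supergradients are precisely second-order test-function touchings) $\Leftrightarrow$ the Dini-supergradient inequality (\ref{super-HJB-D}) (by Proposition~\ref{prop:equiv}(b) applied to $-v$, or by a verbatim repetition of its proof). I would state it that way, cite Proposition~\ref{prop:equiv}, and keep the test-function bookkeeping to a minimum.
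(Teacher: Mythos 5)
Your proposal is correct, and its easy half (the gradient of a $C^1$ test function at a touching point is a Dini supergradient, so (\ref{super-HJB-D}) implies (\ref{dev:visc-sub})) coincides with the paper's argument; for the hard direction (viscosity sub-solution $\Rightarrow$ (\ref{super-HJB-D})) you take a genuinely different route. The paper does not approximate at all: it invokes \cite[Proposition~3.4.12]{CLSW}, which realizes a given $(\alpha,\xi)\in\partial^D v(s,z)$ \emph{exactly} as $\nabla\phi(s,z)$ for a $C^1$ function $\phi$ such that $v-\phi$ has a local maximum at $(s,z)$ itself --- precisely the construction you flagged as ``the main obstacle'' and chose to avoid. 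With that lemma the viscosity inequality is applied once, at the point in question; no continuity of ${\cal H}$ and no limit passage are needed, and the argument is purely local nonsmooth analysis valid for any continuous Hamiltonian. Your route instead goes: viscosity sub-solution $\Rightarrow$ proximal-supergradient inequality (each $(\alpha,\xi)\in\partial^P v$ supplies a quadratic test function) $\Rightarrow$ Dini-supergradient inequality, via the fuzzy approximation \cite[Proposition~3.4.5]{CLSW} applied to $-v$ and continuity of ${\cal H}$ (available from {\it A3)}, {\it A5)}); this is sound and mirrors the proof of Proposition~\ref{prop:equiv}(a), at the price of an extra limiting step and of invoking the standing assumptions through ${\cal H}$. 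Two cautions on your ``cleanest packaging'': Proposition~\ref{prop:equiv}(b) pairs proximal \emph{sub}gradients with Dini \emph{super}gradients, and applying it to $-v$ turns ${\cal H}(s,z,\xi)$ into a maximized Hamiltonian evaluated at $-\xi$, so it cannot be cited literally for the step you want --- your fallback, a verbatim repetition of the \cite[Proposition~3.4.5]{CLSW} argument for supergradients of $v$, is what actually does the work; and the arrow ``proximal inequality $\Rightarrow$ viscosity'' is not ``direct from the definitions'' (a $C^1$ touching test function yields only a Dini, not a proximal, supergradient), so your three statements are equivalent only through the cycle of implications you in fact describe in the main body. Handling (ii) by the $v\mapsto -v$ symmetry matches the paper, which omits it as analogous.
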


\begin{proof}
We begin by proving (i). In the proof of the necessity, as well as in
the proof of sufficiency, the decay condition is fulfilled by hypothesis.
Therefore, it remains only to prove the equivalence between inequalities
(\ref{super-HJB-D}) and (\ref{dev:visc-sub}).

Let $v$ be a continuous function satisfying (\ref{super-HJB-D}) and $\phi$
be an arbitrary $C^1$ test function such that $v - \phi$ has a local
maximum at $(s,z) \in [0,\infty) \times \R^n$. Consequently, $-(v-\phi)$
has a local minimum at $(s,z)$, and we have $0 \in \partial_D (-v + \phi)
(s,z)$. Thus, $-\nabla \phi(s,z) \in \partial_D(-v) (s,z)$. However, this
is equivalent to $\nabla \phi(s,z) \in \partial^D v(s,z)$. Now, taking
$(\alpha,\xi) = \nabla \phi(s,z)$ in (\ref{super-HJB-D}), we conclude that
$\phi$ satisfies (\ref{dev:visc-sub}).

Next we prove the converse. Let $v$ be a viscosity sub-solution of
the HJB equation. Given $(\alpha,\xi) \in \partial^D v$, we have
$(-\alpha,-\xi) \in \partial_D (-v)$. From \cite[Proposition 3.4.12]{CLSW}
follows the existence of a $C^1$ function $-\phi$ such that $\nabla
(-\phi)(s,z) = (-\alpha,-\beta)$ and $-v + \phi$ has a local minimum at
$(s,z)$ (i.e., $v - \phi$ has a local maximum at $(s,z)$). Now, since
$v$ is a viscosity sub-solution, (\ref{super-HJB-D}) follows from
(\ref{dev:visc-sub}) for this particular $\phi$.

The proof of assertion (ii) is analogous and will be omitted. \eop
\end{proof} \bigskip

        As an immediate consequence of Theorem~\ref{theor:dini-visc}, we
conclude that the value function is the unique viscosity solution of
the HJB equation with a particular decay property. This theorem also
characterizes Dini (semi) solutions, which were in focus of our attention
in this paper, as an important tool to investigate nonautonomous optimal
control problems of infinite horizon type. This last assertion is based
on the knowledge that both concepts of solutions, Dini and viscosity,
coincide, agreeing with the theory in finite or (autonomous) infinite
horizon problems.
%
%
%

%

\section{Appendix}\setcounter{equation}{0}

In this appendix we discuss the issue of existence of an optimal
control process for $P_{\infty}(\tau,z)$. The method of proof is
standard, therefore we omit the proof and we make some comments
instead.

\begin{theorem} \label{satz:exist}
Suppose {\it A1), \dots, A6)} are satisfied. Then there exists an
optimal process.
\end{theorem}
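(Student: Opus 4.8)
The plan is the direct method of the calculus of variations, adapted to the infinite horizon. Assume first that ${\cal D}_{\tau,z;\infty} \neq \emptyset$ (implicit in the problem being well posed); then $V(\tau,z) < \infty$, because by A6) every feasible trajectory is confined to the bounded set ${\cal S}_{\tau,z}$, say $|x(t)| \le M$, so by A2) the discounted cost is bounded by $\int_\tau^\infty e^{-\delta t} K_2(1+M)\,dt = \delta^{-1}K_2(1+M)e^{-\delta\tau} < \infty$. Fix therefore a minimizing sequence $(x_k,u_k) \in {\cal D}_{\tau,z;\infty}$ with $J(u_k;\tau,z) \to V(\tau,z)$.

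First I would extract a limit trajectory. All $x_k$ lie in ${\cal S}_{\tau,z}$, so $|x_k(t)| \le M$, and by A2), $|\dot x_k(t)| = |f(t,x_k(t),u_k(t))| \le K_2(1+M)$ a.e.; hence the $x_k$ are uniformly bounded and equi-Lipschitz on $[\tau,\infty)$. It is convenient to adjoin the cost coordinate $y_k(t) := \int_\tau^t e^{-\delta s}l(s,x_k(s),u_k(s))\,ds$; the same estimate gives $|\dot y_k| \le K_2(1+M)$ a.e. and $y_k(\tau)=0$, so on each interval $[\tau,\tau+N]$ the pair $(y_k,x_k)$ is bounded and equi-Lipschitz. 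By the Arzel\`a--Ascoli theorem and a diagonal argument, a subsequence (not relabelled) converges, uniformly on compact subsets of $[\tau,\infty)$, to an absolutely continuous pair $(y,x)$ with $x(\tau)=z$, $y(\tau)=0$; moreover $(\dot y_k,\dot x_k) \rightharpoonup (\dot y,\dot x)$ weakly in $L^1([\tau,\tau+N])$ for each $N$, the sequence being uniformly integrable by the $L^\infty$ bound.

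The core step is a closure argument. By A3) and A5) the extended velocity sets $\bar f(t,x,\Omega)$ are compact; by A4) they are convex; and $(t,x)\mapsto \bar f(t,x,\Omega)$ is upper semicontinuous (continuity of $\bar f$ together with compactness of $\Omega$). Since $(\dot y_k(t),\dot x_k(t)) \in \bar f(t,x_k(t),\Omega)$ a.e., a standard closure theorem for differential inclusions under these hypotheses (see \cite[Ch.~4]{AuCe}, also \cite{Fi}) yields $(\dot y(t),\dot x(t)) \in \bar f(t,x(t),\Omega)$ a.e. on $[\tau,\infty)$. Filippov's measurable selection lemma then provides $u\in L^1_{loc}([\tau,\infty);\R^m)$ with $u(t)\in\Omega$ a.e. and $(\dot y(t),\dot x(t)) = \bar f(t,x(t),u(t))$, i.e. $\dot x = f(t,x,u)$ and $\dot y = e^{-\delta t}l(t,x,u)$. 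Hence $u \in U_{ad}[\tau,\infty)$ with associated state $x$.

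It remains to identify the cost. Since $|x(t)| \le M$ (the limit of the $x_k$ stays in the closure of ${\cal S}_{\tau,z}$), the integrand $e^{-\delta t}l(t,x(t),u(t))$ is dominated by $K_2(1+M)e^{-\delta t} \in L^1([\tau,\infty))$, so by dominated convergence $J(u;\tau,z) = \lim_{N\to\infty} y(\tau+N)$. For fixed $N$, $y(\tau+N) = \lim_k y_k(\tau+N) = \lim_k\big[J(u_k;\tau,z) - \int_{\tau+N}^\infty e^{-\delta t}l(t,x_k(t),u_k(t))\,dt\big]$, and the tail is at most $\delta^{-1}K_2(1+M)e^{-\delta(\tau+N)}$, uniformly in $k$; letting $k\to\infty$ and then $N\to\infty$ gives $J(u;\tau,z) = V(\tau,z)$. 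In particular $J(u;\tau,z) < \infty$, so $(x,u) \in {\cal D}_{\tau,z;\infty}$ is optimal. I expect the closure step to be the only delicate point: it is precisely there that the Cesari-type convexity hypothesis A4) is indispensable, and some care is needed to pass from weak $L^1$-convergence of the velocities on bounded intervals to the pointwise differential inclusion for the limit. The infinite horizon itself costs almost nothing here, thanks to the uniform exponential tail estimate furnished by A6) and $\delta>0$.
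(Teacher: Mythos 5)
Your proposal is correct and follows essentially the same route the paper sketches in its Appendix: a minimizing sequence, equi-Lipschitz bounds from A2) and A6) giving uniform convergence on compact subintervals via Arzel\`a--Ascoli and a diagonal argument, a closure theorem for the convexified extended velocity sets (A4), A5)), Filippov's selection lemma to recover an admissible control, and the exponential tail estimate to pass the cost to the limit. The only point the paper leaves equally implicit is the nonemptiness of ${\cal D}_{\tau,z;\infty}$, which you flag appropriately.
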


The existence result in Theorem~\ref{satz:exist} is under compactness
of the set of admissible control actions and convexity of the so called
extended velocity vector.
We make the observation that the arguments of the proof are the
usual in which we start with a minimizing sequence and under
``compactness" of the trajectories on compact subintervals of
$[\tau,\infty)$ we extract a subsequence, which converges to an
absolutely continuous function uniformly on compact subintervals.
This function, with the help of Filippov's lemma, is proved to be
a state trajectory corresponding to some admissible control.
Together, they form a process that is optimal for $P_{\infty}(\tau,z)$.
These are standard arguments in finite time interval that can be
modified accordingly to be reproduced for infinite time horizon.

\end{document}